\begin{document}

\newtheorem*{theo}{Theorem}
\newtheorem*{pro} {Proposition}
\newtheorem*{cor} {Corollary}
\newtheorem*{lem} {Lemma}
\newtheorem{theorem}{Theorem}[section]
\newtheorem{corollary}[theorem]{Corollary}
\newtheorem{lemma}[theorem]{Lemma}
\newtheorem{proposition}[theorem]{Proposition}
\newtheorem{conjecture}[theorem]{Conjecture}

\theoremstyle{definition}
 \newtheorem{definition}[theorem]{Definition} 
  \newtheorem{example}[theorem]{Example}
   \newtheorem{remark}[theorem]{Remark}
   
\newcommand{\Naturali}{{\mathbb{N}}}
\newcommand{\Reali}{{\mathbb{R}}}
\newcommand{\Complessi}{{\mathbb{C}}}
\newcommand{\Toro}{{\mathbb{T}}}
\newcommand{\Relativi}{{\mathbb{Z}}}
\newcommand{\HH}{\mathfrak H}
\newcommand{\KK}{\mathfrak K}
\newcommand{\LL}{\mathfrak L}
\newcommand{\as}{\ast_{\sigma}}
\newcommand{\tn}{\vert\hspace{-.3mm}\vert\hspace{-.3mm}\vert}
\newcommand{\Mb}{{M^{\rm bim}_0A(\Sigma)}}
\newcommand{\Mbp}{{M^{\rm bim}_0A'(\Sigma)}}
\newcommand{\nd}{{\rm ND}(\alpha)} 
\newcommand{\ndz}{{\rm ND}_0(\alpha)} 
\def\A{{\cal A}}
\def\B{{\cal B}}
\def\D{{\cal D}}
\def\E{{\cal E}}
\def\F{{\cal F}}
\def\H{{\cal H}}
\def\I{{\cal I}}
\def\K{{\cal K}}
\def\L{{\cal L}}
\def\N{{\cal N}}
\def\M{{\cal M}}
\def\gM{{\frak M}}
\def\O{{\cal O}}
\def\P{{\cal P}}
\def\S{{\cal S}}
\def\T{{\cal T}}
\def\U{{\cal U}}
\def\V{{\mathcal V}}
\def\qed{\hfill$\square$}

\title{Negative definite functions for $C^*$-dynamical systems}

\author{Erik B\'edos,
Roberto Conti
\\}
\date{\today}
\maketitle
\markboth{Erik B\'edos, Roberto Conti, }{
}
\renewcommand{\sectionmark}[1]{}
\begin{abstract}
Given an action $\alpha$ of a discrete group on a unital $C^*$-algebra $A$,  
we introduce a natural concept of $\alpha$-negative definiteness for functions from $G$ to $A$, and examine some of the first consequences of such a notion. 
In particular, we prove analogs of theorems due to Delorme-Guichardet and Schoenberg in the classical case where $A$ is trivial.  
We also give a characterization of the Haagerup property for the action $\alpha$ when $G$ is countable.

\vskip 0.9cm
\noindent {\bf MSC 2010}: 46L55, 43A50, 43A55.

\smallskip
\noindent {\bf Keywords}: 
negative definite function,
 C$^*$-dynamical system,  C$^*$-crossed product, 
equivariant action, one-cocycle, Schoenberg type theorem, semigroup of completely positive maps, 
Haagerup property for actions.
\end{abstract}

\section{Introduction} \label{Intro}
Given a $C^*$-dynamical system $(A, G, \alpha)$, Anantharaman-Delaroche introduced in \cite{AD1}  the concept of positive definiteness for $A$-valued continuous functions on $G$ relative to the action $\alpha$. 
She also explained how this notion could be used to characterize the amenability of actions of discrete groups on von Neumann algebras and on commutative $C^*$-algebras. 
More recently, it has been shown \cite{DoRu, BeCo6} that any $\alpha$-positive definite function on $G$ taking values in the center of $A$ naturally induces a completely positive map 
both on the reduced and  the full $C^*$-crossed products associated to  a discrete unital system $(A, G, \alpha)$.  

Parallel to the classical notion of positive definiteness for a complex function on a group, 
it has also been very fruitful to consider 
negative definite functions. (By negative definite we always mean the same as what is called conditionally negative definite, or conditionally of negative type, by some authors). Such functions play an important role in characterizing several properties of groups, such as the Haagerup property \cite{CCJJV} and property (T)  \cite{HV, BHV}.
Somewhat surprisingly,  
a study of negative definite functions for $C^*$-dynamical systems so far has been missing in the literature. Our main goal in writing this paper is to start filling this gap by introducing and investigating the first basic concepts. 
 In order to  make this paper easily accessible,
we stick to the case of a unital discrete $C^*$-dynamical system $(A, G, \alpha)$,
 but we do not see see any serious obstruction in extending most of our results to 
 the general case almost  {\it mutatis mutandis}.

We note that  (conditionally) negative definiteness for real functions on locally compact groupoids were introduced by Tu in \cite{Tu99} (see also \cite{Ren10}). As for groups, his definition has a natural generalization to complex functions. 
In the case of the
transformation groupoid associated to an action of a discrete group $G$ on a compact Hausdorff space $\Omega$, it is not difficult to deduce that 
our concept of negative definiteness for a function from $G$ to $C(\Omega)$ (relative to the induced action) is the same as the one obtained after transposing Tu's definition.  
We also mention a very recent paper \cite{MM} of Moslehian where he considers conditionally positive kernels on sets with values in $C^*$-algebras. It should be noted that our definition of $\alpha$-negative definiteness may be formulated by using his terminology (see Remark \ref{mosle}), but that there is otherwise little overlap between our paper and his.
 
Among our main results, we mention 
a Delorme-Guichardet type theorem (cf.~Theorem \ref{cocyclerep}), saying that
a function $\psi$ on $G$ taking values in the positive cone of $A$ and vanishing on the identity of $G$ is  $\alpha$-negative definite if and only if it can be represented
in the form $\psi(g) = \langle c(g),c(g) \rangle$ for a symmetric one-cocycle $c$ relative to an $\alpha$-equivariant action of $G$ on a Hilbert $A$-module.
We also obtain a natural generalization of the classical Schoenberg theorem, which provides a bridge between $\alpha$-positive and $\alpha$-negative definiteness
for center-valued functions on $G$ (cf.~Theorem \ref{schoenberg}). As an application, we obtain a characterization of the Haagerup property for $\alpha$ when $G$ is countable (cf.'Theorem \ref{Haa-sp-prop}). This notion was recently introduced by Dong and Ruan in \cite{DoRu}.  

We hope that the present work will provide useful tools in noncommutative harmonic analysis and potential theory, e.g., in the study of $C^*$-dynamical systems, of
semigroups of completely positive maps, and  of noncommutative Dirichlet forms. 
We  discuss briefly a couple of examples of this sort,
but  we expect that other similar applications will appear soon.
In a different direction, it might be interesting to enlarge our set up
and study  negative definiteness for functions from $G \times A$ into $A$ that are linear in the second variable, 
as we did for positive definiteness  in \cite{BeCo6}. 
We plan to return to this in a subsequent work. 

\section{Preliminaries}\label{Preliminaries} 

Let $A$ be a $C^*$-algebra. We will denote  the center of $A$ by $Z(A)$, the self-adjoint part of $A$ by $A_{\rm sa}$,  the cone of positive elements in $A$ by $A^+$ and the $n\times n$ matrices over $A$ for some natural number $n$ by $M_n(A)$. By a Hilbert $A$-module we will mean a right Hilbert $C^*$-module over $A$, as defined for instance in \cite{La1}.

We record here some lemmas that we will need in the sequel. The first one is proven in \cite{LN} (see Lemma 3.1 therein).
\begin{lemma}\label{schur}
The Schur product of a positive matrix in $M_n(A)$ and a positive element in $M_n(Z(A))$
is still positive in $M_n(A)$.
\end{lemma}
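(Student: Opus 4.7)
The plan is to reduce positivity of the Schur product to positivity of a sum of congruences $D^*XD$, with centrality playing the crucial role. Write $X=(x_{ij})\in M_n(A)$ for the given positive matrix and $Y=(y_{ij})\in M_n(Z(A))$ for the given positive central matrix; I denote the Schur product by $X\circ Y=(x_{ij}y_{ij})$.

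Since $Z(A)$ is a commutative $C^*$-algebra, so is $M_n(Z(A))$, and $Y$ admits a factorization $Y=B^*B$ for some $B=(b_{ij})\in M_n(Z(A))$; concretely, one may take $B=Y^{1/2}$ computed inside $M_n(Z(A))$, whose entries again lie in $Z(A)$. Unfolding this gives
\[
y_{ij} \,=\, \sum_{k=1}^{n} b_{ki}^{\,*}\, b_{kj}, \qquad 1\le i,j\le n.
\]
The key step is then to exploit that each $b_{ki}$ lies in $Z(A)$, so it commutes with every entry of $X$. Hence
\[
(X\circ Y)_{ij} \,=\, x_{ij}\, y_{ij} \,=\, \sum_{k=1}^{n} b_{ki}^{\,*}\, x_{ij}\, b_{kj} \,=\, \sum_{k=1}^{n} (D_k^*\, X\, D_k)_{ij},
\]
where $D_k\in M_n(A)$ denotes the diagonal matrix with diagonal entries $b_{k1},\ldots,b_{kn}$. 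Summing over $k$ yields the identity
\[
X\circ Y \,=\, \sum_{k=1}^{n} D_k^*\, X\, D_k,
\]
and positivity of the left-hand side is immediate, since $X\ge 0$ forces $D_k^*XD_k\ge 0$ for each $k$ and the positive cone is closed under finite sums.

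The main thing to be careful about is to identify precisely where the hypothesis that $Y$ is central is used: it is exactly what guarantees that $Y^{1/2}$ can be chosen in $M_n(Z(A))$, which in turn legitimises the commutation $x_{ij}\,b_{kj}=b_{kj}\,x_{ij}$ in the displayed computation above. Without this, the rewriting of $X\circ Y$ as a sum of congruences $D_k^*XD_k$ breaks down. Aside from this point, the argument is essentially the operator-algebraic transcription of Schur's classical proof that the Hadamard product of two positive scalar matrices is positive.
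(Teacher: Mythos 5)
Your argument is correct, and in fact the paper does not prove this lemma at all: it simply cites Lemma~3.1 of Leung--Ng \cite{LN}, so there is no in-paper proof to compare against. Your decomposition $X\circ Y=\sum_k D_k^*XD_k$ obtained from a factorization $Y=B^*B$ with $B\in M_n(Z(A))$, using centrality of the $b_{ki}$ to slide them past $x_{ij}$, is exactly the standard operator-algebraic version of Schur's argument and establishes the statement.

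One small inaccuracy to fix: $M_n(Z(A))$ is \emph{not} a commutative $C^*$-algebra for $n\ge 2$ (already $M_2(\mathbb{C})$ is noncommutative), so your justification for why $Y^{1/2}$ has central entries is wrong as stated. But the conclusion you need is still true for the right reason: $M_n(Z(A))$ is a $C^*$-subalgebra of $M_n(A)$, and since positivity and the continuous functional calculus of a self-adjoint element are the same computed in a $C^*$-subalgebra as in the ambient algebra, $Y^{1/2}$ lies in the $C^*$-subalgebra generated by $Y$ inside $M_n(Z(A))$, hence has entries in $Z(A)$. With that one-line repair the proof is complete.
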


\begin{lemma}\label{transposed0}
Assume $B$ is a commutative $C^*$-algebra, $n\in \mathbb{N}$ and let $[b_{ij}]\in M_n(B)^+$. Then 
$[b^*_{ij}]\in M_n(B)^+$.
\end{lemma}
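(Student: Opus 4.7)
My plan is to reduce to the scalar case via the Gelfand representation. Since $B$ is a commutative $C^*$-algebra, I would identify $B$ with $C_0(X)$ for some locally compact Hausdorff space $X$. Positivity in $M_n(B) \cong C_0(X, M_n(\Complessi))$ is then checked pointwise: $[b_{ij}] \in M_n(B)^+$ if and only if the scalar matrix $[b_{ij}(x)] \in M_n(\Complessi)$ is positive semidefinite for every $x \in X$. Under this identification, $b_{ij}^*(x) = \overline{b_{ij}(x)}$, so it suffices to prove the statement for a single positive scalar matrix.

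The key observation is that for a hermitian matrix $C = [c_{ij}] \in M_n(\Complessi)$, the entrywise complex conjugate $\overline{C} = [\overline{c_{ij}}]$ coincides with the transpose $C^T = [c_{ji}]$, because hermiticity gives $\overline{c_{ij}} = c_{ji}$. Since any positive matrix is hermitian, and since the transpose of a positive matrix is positive (its eigenvalues are the same, or equivalently, if $C = D^*D$ then $C^T = \overline{D}^*\overline{D}$), the conjugate of a positive scalar matrix is positive. Applying this pointwise to $[b_{ij}(x)]$ for each $x \in X$ gives the result.

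There is essentially no obstacle here; the lemma is a routine consequence of Gelfand duality together with the elementary fact that complex conjugation preserves positivity of hermitian scalar matrices. The only mild subtlety is remembering that over a non-commutative $C^*$-algebra this statement would fail in general, so the commutativity of $B$ is used exactly to reduce to the commutative pointwise picture where conjugation and transposition agree on hermitian matrices.
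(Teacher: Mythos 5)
Your proof is correct, but it takes a genuinely different route from the paper's. The paper argues purely algebraically inside $M_n(B)$: it factors $[b_{ij}]=C^*C$ with $C=[c_{ij}]$, so that $b_{ij}=\sum_k c_{ki}^*c_{kj}$, and then uses commutativity of $B$ to rewrite $b_{ij}^*=\sum_k c_{kj}^*c_{ki}=\sum_k (c_{ki}^*)^*c_{kj}^*$, exhibiting $[b_{ij}^*]=D^*D$ with $D=[c_{ij}^*]$. You instead pass through Gelfand duality, identify $M_n(B)$ with $C_0(X,M_n(\mathbb{C}))$, and reduce to the scalar fact that the entrywise conjugate of a positive hermitian matrix is its transpose, hence positive. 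Both arguments are sound and both use commutativity essentially (yours to obtain the pointwise picture, the paper's to commute the factors $c_{kj}^*$ and $c_{ki}$). The paper's version is slightly more self-contained, requiring only the factorization $[b_{ij}]=C^*C$; yours is more conceptual and is exactly the technique the paper itself deploys for the Schur exponential in Lemma \ref{schurexp}, so it fits the surrounding toolkit well. The one step you should make explicit rather than assert is that positivity in $C_0(X,M_n(\mathbb{C}))$ is equivalent to pointwise positivity of the matrix values; this is standard but it is the load-bearing part of the reduction.
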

\begin{proof} We may write $[b_{ij}]=C^*C$ for some $C=[c_{ij}]\in M_n(B)$. Consider $i,j \in \{1,\ldots, n\}$. Then we have
$b_{ij} =\sum_{k=1}^n c_{ki}^* c_{kj}$. Since $B$ is commutative, we get
\[b_{ij}^* = \sum_{k=1}^n c_{kj}^* c_{ki} = \sum_{k=1}^n (c^*_{ki})^* c_{kj}^*\,.\]
Thus, setting $D=[c^*_{ij}] \in M_n(B)$, we get $[b^*_{ij}] = D^*D \in M_n(B)^+$.
\end{proof}
\begin{lemma}\label{transposed}
Let $X$ be a Hilbert $A$-module and assume $x_1,\ldots, x_n\in X$ are such that 
$\langle x_i,x_j \rangle \in Z(A)$, for all $i,j=1,\ldots,n$. Then the transposed matrix 
$\big[\langle x_j,x_i \rangle\big]$ is positive in $M_n(Z(A))$.
\end{lemma}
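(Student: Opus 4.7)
The plan is to reduce the claim to Lemma \ref{transposed0} applied to the commutative $C^*$-algebra $B=Z(A)$. The key input is the standard fact that for any Hilbert $A$-module $X$ and any $x_1,\ldots, x_n \in X$, the Gram matrix $[\langle x_i,x_j\rangle]$ is positive in $M_n(A)$. This is because $[\langle x_i,x_j\rangle] = T^*T$ in a suitable sense: one can realize $(x_1,\ldots,x_n)$ as a single element of $X^n$ and compute its ``inner product'' with itself, or equivalently use the fact that any positive functional $\varphi$ on $A$ gives a sesquilinear positive semidefinite form $(x,y)\mapsto \varphi(\langle x,y\rangle)$ on $X$.

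Next I would observe that, under our hypothesis that all the entries $\langle x_i,x_j\rangle$ lie in $Z(A)$, the matrix $[\langle x_i,x_j\rangle]$ actually sits in the $C^*$-subalgebra $M_n(Z(A))$ of $M_n(A)$. Since positivity is preserved under inclusion of $C^*$-subalgebras (an element of a $C^*$-subalgebra is positive there if and only if it is positive in the larger algebra), we deduce that $[\langle x_i,x_j\rangle] \in M_n(Z(A))^+$.

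Now I would apply Lemma \ref{transposed0} to $B = Z(A)$ with $b_{ij} := \langle x_i,x_j\rangle$. Since $b_{ij}^* = \langle x_i,x_j\rangle^* = \langle x_j,x_i\rangle$, the conclusion $[b_{ij}^*]\in M_n(Z(A))^+$ is exactly the desired statement that $[\langle x_j,x_i\rangle]$ is positive in $M_n(Z(A))$.

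There is no real obstacle here; the proof is essentially a two-line reduction. The only subtle point worth spelling out is the passage from positivity of the Gram matrix in $M_n(A)$ to positivity in $M_n(Z(A))$, which relies on the elementary (but important) fact that $M_n(Z(A))$ inherits its order structure from $M_n(A)$ via the canonical inclusion.
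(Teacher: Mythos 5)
Your proof is correct and follows exactly the same route as the paper: positivity of the Gram matrix in $M_n(A)$, the observation that it therefore lies in $M_n(Z(A))^+$, and an application of Lemma \ref{transposed0} with $B=Z(A)$. No differences worth noting.
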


\begin{proof}
It is well known (cf.~\cite[Lemma 4.2]{La1}) that the matrix $[\langle x_i,x_j \rangle]$ is positive in $M_n(A)$. Since  this matrix lies in $M_n(Z(A))$ by assumption, it follows that $\big[\langle x_i,x_j \rangle\big] \in M_n(Z(A))^+$. Thus, using Lemma \ref{transposed0}, we get\[\big[\langle x_j,x_i \rangle\big] = \big[\,\langle x_i,x_j \rangle^*]  \in M_n(Z(A))^+\,.\]
\vspace{-5ex}\end{proof}

\begin{lemma}\label{schurexp}
Assume $B$ is a commutative $C^*$-algebra.
Let $\Gamma = [\gamma_{ij}] \in M_n(B)$ and let $e^{\circ \Gamma}:=[e^{\gamma_{ij}}] \in M_n(B)$ denote its Schur exponential.
If $\Gamma$ is positive, then $e^{\circ \Gamma}$ is positive too.
\end{lemma}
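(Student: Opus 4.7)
The plan is to use the Schur (Hadamard) power series expansion of the exponential. Writing $\Gamma^{\circ k} := [\gamma_{ij}^k] \in M_n(B)$ for the $k$-fold Schur product of $\Gamma$ with itself (with $\Gamma^{\circ 0}$ equal to the $n\times n$ matrix each of whose entries is the unit $1_{\tilde B}$, passing to the unitization $\tilde B$ if necessary), one has the entrywise identity
\[ e^{\circ \Gamma} \,=\, \sum_{k=0}^{\infty} \frac{1}{k!}\, \Gamma^{\circ k}, \]
where convergence is in the norm of $M_n(\tilde B)$, since each entry $\sum_k \gamma_{ij}^k/k!$ converges in the $C^*$-norm to $e^{\gamma_{ij}}$. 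So it suffices to show that every partial sum is positive, and then invoke norm-closedness of the positive cone.

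The main step is to prove by induction on $k$ that $\Gamma^{\circ k}$ is positive in $M_n(\tilde B)$. For $k=0$, the all-ones matrix equals $v v^*$ where $v = (1_{\tilde B},\ldots,1_{\tilde B})^t$, hence is positive. For $k=1$ the claim is our hypothesis on $\Gamma$. For the inductive step, observe that since $B$ is commutative we have $Z(B) = B$, so both $\Gamma$ and $\Gamma^{\circ k}$ lie in $M_n(Z(\tilde B))^+$ (where the ambient algebra is taken to be $\tilde B$, which is still commutative). Then Lemma~\ref{schur}, applied with $A = \tilde B$, shows that the Schur product $\Gamma \circ \Gamma^{\circ k} = \Gamma^{\circ (k+1)}$ is positive.

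Finally, finite sums of positives are positive, so each partial sum $\sum_{k=0}^{N} \Gamma^{\circ k}/k!$ lies in $M_n(\tilde B)^+$; passing to the norm limit yields $e^{\circ \Gamma} \in M_n(\tilde B)^+$, and since $M_n(B)$ is a (hereditary) $C^*$-subalgebra of $M_n(\tilde B)$ containing $e^{\circ \Gamma}$ whenever $B$ is unital, we conclude that $e^{\circ \Gamma}$ is positive in $M_n(B)$. The only subtle point is the bookkeeping with the unitization so that Lemma~\ref{schur} applies verbatim; the rest is a routine Hadamard-product computation.
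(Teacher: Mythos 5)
Your proof is correct, but it takes a genuinely different route from the paper's. The paper realizes $B$ as $C_0(\Omega)$ via the Gelfand transform, identifies $M_n(B)$ with $C_0(\Omega, M_n(\mathbb{C}))$, and verifies positivity of $e^{\circ \Gamma}$ pointwise on $\Omega$, invoking as a black box the classical scalar fact that the Schur exponential of a positive matrix in $M_n(\mathbb{C})$ is positive. You instead expand $e^{\circ \Gamma}$ as a Hadamard power series and prove positivity of each Schur power $\Gamma^{\circ k}$ by induction, using Lemma \ref{schur} (the $C^*$-algebraic Schur product theorem) together with the observation that commutativity of $B$ places $\Gamma^{\circ k}$ in $M_n(Z(\tilde B))^+$; norm-closedness of the positive cone then finishes the argument. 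In effect you have lifted the standard proof of the scalar fact to the $C^*$-level rather than reducing to it, which makes your argument self-contained modulo Lemma \ref{schur} and avoids the Gelfand picture entirely. Your bookkeeping with the unitization is also somewhat more careful than the paper's: as stated the lemma tacitly assumes $B$ unital (otherwise $e^{\gamma_{ij}} \notin B$), a point the paper's proof glosses over and which is harmless in its only application, where $B = Z(A)$ with $A$ unital. The two arguments are of comparable length; the paper's is marginally quicker because it outsources the combinatorics to the known scalar case, while yours makes the mechanism (Schur products of positives are positive) explicit.
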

\begin{proof}
It is well known that the assertion is true when $B = {\mathbb C}$.
Realizing $B$ as the continuous functions on its Gelfand spectrum $\Omega$ and identifying $M_n(B)$ with $C_0(\Omega,M_n(\mathbb{C}))$ in the
natural way, we have $$e^{\circ \Gamma}(\omega) = [e^{\gamma_{ij}}(\omega)] = [e^{\gamma_{ij}(\omega)}] = e^{\circ [\gamma_{ij}(\omega)]} = e^{\circ \Gamma(\omega)} $$
for all $\omega \in \Omega$. Assume now that $\Gamma \in M_n(B)^+$ and let $\omega\in \Omega$. Then we have $\Gamma(\omega) \in M_n(\mathbb{C})^+$, so we get $e^{\circ \Gamma}(\omega) = e^{\circ \Gamma(\omega)} \in M_n(\mathbb{C})^+$. This shows that $e^{\circ \Gamma} \in M_n(B)^+$.
\end{proof}

Let $\alpha:G\to {\rm Aut}(A)$ denote an action of a (discrete) group $G$ on $A$.
Following \cite{AD1, AD2} we will say that a function $\varphi: G \to A\,$ is \emph{$\alpha$-positive definite} 
if for any $n \in {\mathbb N}$ and $g_1,\ldots,g_n \in G$, we have
$$\Big[\alpha_{g_i}\Big(\varphi(g_i^{-1}g_j)\Big)\Big] \geq 0 $$
in $M_n(A)$. In other words,  for any $n \in {\mathbb N}$, $g_1,\ldots,g_n \in G$ and $a_1, \ldots, a_n \in A$, we have
\[\sum_{i,j=1}^n a_i^* \alpha_{g_i}\big(\varphi(g_i^{-1}g_j)\big)a_j \geq 0 \]
in $A$. 
In the scalar case (i.e., $A = {\mathbb C}$), one recovers the classical notion of positive definiteness.

We recall from \cite[Proposition 2.4]{AD1} that 
if $\varphi:G\to A$ is $\alpha$-positive definite, then for every $g\in G$ 
we have
\begin{equation}\label{pdh}
\alpha_g(\varphi(g^{-1})) = \varphi(g)^*\,.
\end{equation}
Moreover, if $e$ denotes the identity of $G$, we have 
\begin{equation}\label{pdh2}\varphi(e) \in A^+\,.\end{equation}
We will also need the following two results.
\begin{lemma}\label{phistar} Assume $\varphi: G\to Z(A)$ is $\alpha$-positive definite. Define $\varphi^*:G\to Z(A)$ by $\varphi^*(g) = \varphi(g)^*$ for each $g\in G$.
Then $\varphi^*$ is $\alpha$-positive definite.
\end{lemma}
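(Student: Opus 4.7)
The plan is to reduce everything to Lemma \ref{transposed0} applied to the commutative $C^*$-algebra $B = Z(A)$. The hypothesis $\varphi: G \to Z(A)$ means all values of $\varphi$ are central, and since every $\alpha_g$ is a $*$-automorphism of $A$ it preserves $Z(A)$. Hence for any $g_1,\ldots,g_n \in G$ the defining matrix
\[ N := \bigl[\,\alpha_{g_i}\bigl(\varphi(g_i^{-1}g_j)\bigr)\,\bigr] \]
lives in $M_n(Z(A))$. By the $\alpha$-positive definiteness of $\varphi$ we have $N \in M_n(A)^+$, and since $Z(A)$ is a $C^*$-subalgebra of $A$ this yields $N \in M_n(Z(A))^+$.

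Next I would invoke Lemma \ref{transposed0} with $B = Z(A)$: the entrywise-adjoint matrix $[N_{ij}^*]$ is again in $M_n(Z(A))^+$, a fortiori in $M_n(A)^+$. The final step is a purely formal identification: since $\alpha_{g_i}$ is a $*$-homomorphism,
\[ N_{ij}^* \;=\; \alpha_{g_i}\bigl(\varphi(g_i^{-1}g_j)\bigr)^* \;=\; \alpha_{g_i}\bigl(\varphi(g_i^{-1}g_j)^*\bigr) \;=\; \alpha_{g_i}\bigl(\varphi^*(g_i^{-1}g_j)\bigr), \]
so the matrix witnessing $\alpha$-positive definiteness of $\varphi^*$ at $(g_1,\ldots,g_n)$ is exactly $[N_{ij}^*]$, which we have just shown is positive. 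Since $n$ and the $g_i$ were arbitrary, $\varphi^*$ is $\alpha$-positive definite.

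There is no real obstacle here; the only thing to notice is that the hypothesis $\varphi(G) \subseteq Z(A)$ is what allows us to leave the noncommutative world and apply the commutative fact in Lemma \ref{transposed0}. Without centrality, taking entrywise adjoints is not a positivity-preserving operation, so the statement would fail in general; centrality is precisely what makes the argument go through.
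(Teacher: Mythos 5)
Your proof is correct and follows essentially the same route as the paper: both note that the matrix $\bigl[\alpha_{g_i}\bigl(\varphi(g_i^{-1}g_j)\bigr)\bigr]$ is positive in $M_n(Z(A))$ and then apply Lemma \ref{transposed0} to the commutative algebra $Z(A)$, identifying the entrywise adjoint with the matrix attached to $\varphi^*$. Your closing remark about centrality being the essential hypothesis is a nice touch but adds nothing beyond what the paper's one-line argument already uses.
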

\begin{proof} Let $g_1, \ldots, g_n \in G$. Then $\big[\alpha_{g_i}\big(\varphi(g_i^{-1}g_j)\big)\big] \in M_n(Z(A))^+$.  Using Lemma \ref{transposed0}, we get 
\[\big[\alpha_{g_i}\big(\varphi^*(g_i^{-1}g_j)\big)\big] =
\big[\alpha_{g_i}\big(\varphi(g_i^{-1}g_j)^*\big)\big] = \big[\alpha_{g_i}\big(\varphi(g_i^{-1}g_j)\big)^*\big] \in M_n(Z(A))^+\,.\] 
\end{proof}

\begin{lemma}\label{posdefprod} Assume $\varphi_1: G\to A$ and $\varphi_2: G\to Z(A)$ are both $\alpha$-positive definite. Then the pointwise product $\varphi_1\varphi_2$ from  $G$ to $A$ is also $\alpha$-positive definite.
\end{lemma}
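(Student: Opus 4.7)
The plan is to reduce this to the Schur product lemma (Lemma \ref{schur}). Observe that since each $\alpha_{g_i}$ is a $*$-homomorphism,
\[
\alpha_{g_i}\bigl((\varphi_1\varphi_2)(g_i^{-1}g_j)\bigr) \;=\; \alpha_{g_i}\bigl(\varphi_1(g_i^{-1}g_j)\bigr)\cdot \alpha_{g_i}\bigl(\varphi_2(g_i^{-1}g_j)\bigr),
\]
so the matrix $\bigl[\alpha_{g_i}((\varphi_1\varphi_2)(g_i^{-1}g_j))\bigr]$ is precisely the Schur (entrywise) product of the two matrices
\[
P := \bigl[\alpha_{g_i}(\varphi_1(g_i^{-1}g_j))\bigr]\quad\text{and}\quad Q := \bigl[\alpha_{g_i}(\varphi_2(g_i^{-1}g_j))\bigr].
\]

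The first step is to note that $P \in M_n(A)^+$ directly from $\alpha$-positive definiteness of $\varphi_1$. For $Q$, I first use that any automorphism of $A$ preserves the center, so $\alpha_{g_i}(Z(A)) = Z(A)$; hence $Q$ actually lies in $M_n(Z(A))$. From $\alpha$-positive definiteness of $\varphi_2$ we get $Q \geq 0$ in $M_n(A)$, and since $M_n(Z(A))$ is a $C^*$-subalgebra of $M_n(A)$, positivity transfers, giving $Q \in M_n(Z(A))^+$.

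Now Lemma \ref{schur} applies directly: the Schur product of a positive element of $M_n(A)$ with a positive element of $M_n(Z(A))$ is positive in $M_n(A)$. Since $g_1,\ldots,g_n$ were arbitrary, this shows that $\varphi_1\varphi_2$ is $\alpha$-positive definite. There is no real obstacle here; the only subtle point is the observation that $Q$ genuinely sits in $M_n(Z(A))$ so that Lemma \ref{schur} is applicable, which is why the hypothesis $\varphi_2(G)\subset Z(A)$ is needed.
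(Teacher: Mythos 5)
Your proof is correct and is exactly the ``straightforward application of Lemma \ref{schur}'' that the paper invokes: you identify the relevant matrix as the Schur product of the positive matrix coming from $\varphi_1$ and the positive central-valued matrix coming from $\varphi_2$ (using that automorphisms preserve $Z(A)$). Nothing is missing.
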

\begin{proof} This follows from a straightforward application of Lemma \ref{schur}. 
\end{proof}

\section{Negative definite functions relative to a $C^*$-dynamical system}
Since the concept of (conditionally) negative definiteness for complex functions on groups is useful in many contexts (see e.g.~\cite{BCR, CCJJV}),
it is natural to 
investigate a notion of negative definiteness relative to 
$C^*$-dynamical systems. 
Throughout this paper, we let 
$\alpha:G\to {\rm Aut}(A)$ 
denote an action of a (discrete) group $G$ on a  unital $C^*$-algebra $A$ and let  
$A^\alpha=\{a\in A\mid \alpha_g(a) = a \text{ for all } g\in G\}$ denote the fixed-point algebra of $A$ under $\alpha$. 
The identity element of $G$ will be denoted by $e$ and the unit of $A$ will be denoted by $1_A$.

The following definition is the natural generalization of the classical notion.

\begin{definition}\label{negdef}
We will say that a function $\psi: G \to A$ is \emph{$\alpha$-negative definite} if 
\[\alpha_g(\psi(g^{-1})) = \psi(g)^*\] for all $g \in G$ and, 
for any $n \in {\mathbb N}$, $g_1,\ldots,g_n \in G$ and $b_1,\ldots,b_n \in A$ with
$\sum_{i=1}^n b_i = 0$, we have
$$\sum_{i,j=1}^n b_i^* \alpha_{g_i}\big(\psi(g_i^{-1}g_j)\big)b_j \leq 0  $$
in $A$.
We will  say that an $\alpha$-negative definite function $\psi$ is \emph{normalized} when $\psi(e) = 0$. 
 Moreover, we will let $\nd$ denote the set of all $\alpha$-negative definite functions and set 
 $\ndz=\{ \psi \in \nd\mid \psi(e)=0\}$.
\end{definition}
Clearly, $\nd$ contains every constant function from $G$ to $A$ of the form $g\to t\, 1_A$ for some $t\in \mathbb{R}$.  Also, it follows immediately that $\nd$ is a cone (that is, the sum of $\alpha$-negative definite functions as well as any positive multiple of an $\alpha$-negative definite function are again $\alpha$-negative definite) and that 
$\ndz$ is a subcone of $\nd$. 
Moreover, we have:  
\begin{lemma}\label{pointwise}
The cones $\nd$ and $\ndz$ 
are closed w.r.t.~the pointwise norm-topology.
\end{lemma}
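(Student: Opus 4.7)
The plan is to verify directly that pointwise norm-limits preserve the two defining conditions of Definition \ref{negdef}, leveraging norm-continuity of the automorphisms $\alpha_g$ and the involution, together with norm-closedness of the positive cone $A^+$.

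Concretely, let $\{\psi_\lambda\}_{\lambda\in\Lambda}$ be a net in $\nd$ and let $\psi:G\to A$ be such that $\psi_\lambda(g)\to \psi(g)$ in norm for every $g\in G$. First I would check the symmetry condition: for any fixed $g\in G$, the fact that $\alpha_g$ is a $*$-automorphism of $A$ (hence isometric) yields $\alpha_g(\psi_\lambda(g^{-1}))\to \alpha_g(\psi(g^{-1}))$ in norm, while norm-continuity of the involution gives $\psi_\lambda(g)^*\to \psi(g)^*$. Since $\alpha_g(\psi_\lambda(g^{-1}))=\psi_\lambda(g)^*$ for all $\lambda$, passing to the limit yields $\alpha_g(\psi(g^{-1}))=\psi(g)^*$.

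Next I would verify the negativity inequality. Fix $n\in\mathbb{N}$, $g_1,\ldots,g_n\in G$ and $b_1,\ldots,b_n\in A$ with $\sum_{i=1}^n b_i=0$. For each $\lambda$,
\[
T_\lambda := \sum_{i,j=1}^n b_i^*\,\alpha_{g_i}\bigl(\psi_\lambda(g_i^{-1}g_j)\bigr)\,b_j \leq 0
\]
in $A$. Since each $\alpha_{g_i}$ is isometric and left/right multiplication by the fixed elements $b_i^*, b_j$ is norm-continuous, $T_\lambda$ converges in norm to
\[
T := \sum_{i,j=1}^n b_i^*\,\alpha_{g_i}\bigl(\psi(g_i^{-1}g_j)\bigr)\,b_j.
\]
As the cone $-A^+$ is norm-closed in $A$, we conclude $T\leq 0$. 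Thus $\psi\in\nd$, which shows that $\nd$ is closed in the pointwise norm topology.

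Finally, the statement for $\ndz$ is immediate: if $\psi_\lambda(e)=0$ for all $\lambda$, then by norm convergence at $e$ we obtain $\psi(e)=0$, so $\psi\in\ndz$. There is no real obstacle here; the argument is essentially the standard closure trick for positive/negative-type conditions, and the only thing to keep in mind is to treat both the symmetry identity and the matrix inequality, rather than the inequality alone.
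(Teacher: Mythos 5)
Your argument is correct and coincides with the paper's own proof: both verify that the symmetry identity and the negativity inequality pass to pointwise norm limits, using continuity of $\alpha_g$, the involution and multiplication, together with norm-closedness of $A^+$, and both handle $\ndz$ by the trivial observation that $\psi(e)=0$ survives the limit. No gaps.
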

\begin{proof}
As $\ndz$ is closed in $\nd$ with respect to the pointwise norm-topology, it suffices to prove the assertion for $\nd$.
Assume that $\{\psi_\beta\}$ is a net in $\nd$ converging to some $\psi: G \to A$ w.r.t.~the pointwise norm topology.
Then for every $g \in G$ we have \[\alpha_g(\psi(g^{-1})) = \lim_\beta \alpha_g(\psi_\beta(g^{-1})) = \lim_\beta \psi_\beta(g)^* = \psi(g)^*\,.\]
Moreover, let $g_1,\ldots,g_n \in G$ and let $b_1, \ldots, b_n \in A$ satisfy $\sum_{i=1}^n b_i = 0$. Then for every $\beta$ we have
\[\sum_{i,j=1}^n b_i^* \alpha_{g_i}\big(\psi_\beta(g_i^{-1}g_j)\big)b_j \leq 0\,,\]  so we get
\[\sum_{i,j=1}^n b_i^* \alpha_{g_i}\big(\psi(g_i^{-1}g_j)\big)b_j 
= \lim_\beta\Big(\sum_{i,j=1}^n b_i^* \alpha_{g_i}\big(\psi_\beta(g_i^{-1}g_j)\big)b_j \Big) \leq 0 \]
since $A^+$ is norm-closed in $A$. Thus $\psi \in \nd$. 
\end{proof}

\begin{remark}\label{basicnegdef}
Let $\psi \in \nd$. 
Then we have $\psi(e)^* = \alpha_e\big(\psi(e^{-1})\big) = \psi(e)\,,$ so $\psi(e) \in A_{\rm sa}$.  
Moreover, taking $n=2$, $g_1 = e$, $g_2 = g$, $b_1 = 1_A = -b_2$ in Definition \ref{negdef}, we get 
\[\psi(e) - \psi(g) - \alpha_g(\psi(g^{-1})) + \alpha_g(\psi(e))
= \psi(e) + \alpha_g(\psi(e)) - 2\, {\rm Re}\big( \psi(g)\big) \leq 0\,,\]
hence  \[{\rm Re}\big( \psi(g)\big) \geq \frac{1}{2}\big(\psi(e) + \alpha_g(\psi(e))\big)\] for all $g \in G$.
In particular,  if $\psi(e) \geq 0$, then
$\,{\rm Re} \big(\psi(g)\big) \in A^+$ for all $g \in G$.
\end{remark}

\begin{remark}
Let $\psi:G\to A$ and define $\psi_0 : G \to A$  by \[\psi_0(g) = \psi(g) - \psi(e)\,\] for every $g\in G$,  so $\psi_0(e) = 0$. Assume that $\psi(e) \in A_{\rm sa} \cap A^\alpha$. We leave to the reader to
verify that $\psi \in \nd $ if and and only if $\psi_0 \in \ndz$.
\end{remark}

\begin{remark}\label{oneminus}
Let $\varphi: G \to A$ be $\alpha$-positive definite. In particular, $\varphi(e) \in A_{\rm sa}$, cf.~(\ref{pdh2}). Assume that we also have $\varphi(e) \in A^\alpha$.  
Then the function $\psi: G\to A$ defined by \[\psi(g)= \varphi(e) - \varphi(g)\] 
belongs to $\ndz$.

Indeed, consider $g\in G$. Then, using (\ref{pdh}), we have
\[\alpha_g(\psi(g^{-1}))=\alpha_g\big(\varphi(e) - \varphi(g^{-1})\big) = \varphi(e) - \alpha_g(\varphi(g^{-1}))
= \varphi(e)^* - \varphi(g)^*=\psi(g)^*\,.\] 
Moreover, for any $g_1,\ldots, g_n \in G$ and $b_1,\ldots,b_n \in A$ with $\sum_{i=1}^n b_i = 0$, we have 
$$
\sum_{i,j=1}^n b_i^* \alpha_{g_i}\big(\psi(g_i^{-1}g_j)\big)b_j=\sum_{i,j=1}^n b_i^* \alpha_{g_i} \big(\varphi(e) - \varphi(g_i^{-1}g_j)\big) b_j
= - \sum_{i,j=1}^n b_i^*  \alpha_{g_i} \big(\varphi(g_i^{-1}g_j)\big) b_j \leq 0 \,,
$$ 
as desired.
\end{remark}

\begin{proposition}\label{gamma}
Let $\psi:G\to A$ and define $\gamma:G\times G \to A$ by
\begin{equation}\label{kernel}
\gamma(g,h) = 
\psi(g)^*  + \psi(h) -\psi(e) - \alpha_g(\psi(g^{-1}h)) 
\end{equation}
for all $g,h \in G$.
Then the following two assertions are equivalent:
\begin{itemize}
\item[$($i$)$] $\psi \in \nd;$
\item[$($ii$)$]
For every $n \in \mathbb{N}$ and $g_1,\ldots,g_n \in G$, the matrix $[\gamma(g_i,g_j)]$ is positive in $M_n(A)$. 
\end{itemize}
\end{proposition}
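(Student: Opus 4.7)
The plan is to reduce both implications to a single algebraic identity, derived by a direct expansion of the four summands in $\gamma(g_i,g_j)$. Writing $S = \sum_{i=1}^n a_i$ and $T = \sum_{i=1}^n \psi(g_i) a_i$, and using that $\sum_{i,j} a_i^* \psi(g_i)^* a_j = T^*S$, $\sum_{i,j} a_i^* \psi(g_j) a_j = S^*T$, and $\sum_{i,j} a_i^* \psi(e) a_j = S^* \psi(e) S$, one obtains the master identity
\[
\sum_{i,j=1}^n a_i^* \, \gamma(g_i, g_j)\, a_j \;=\; T^* S + S^* T - S^* \psi(e)\, S - \sum_{i,j=1}^n a_i^* \, \alpha_{g_i}\bigl(\psi(g_i^{-1} g_j)\bigr)\, a_j.
\]

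For the implication (i)$\,\Rightarrow\,$(ii), I would fix $g_1, \ldots, g_n$ and $a_1, \ldots, a_n \in A$ and augment the tuple by setting $g_0 := e$, $b_0 := -S$, and $b_i := a_i$ for $1 \leq i \leq n$. Since $\sum_{i=0}^n b_i = 0$, Definition \ref{negdef} yields $\sum_{i,j=0}^n b_i^* \alpha_{g_i}\bigl(\psi(g_i^{-1} g_j)\bigr) b_j \leq 0$. Splitting this sum according to whether $i$ and/or $j$ equals $0$, and using the identity $\alpha_{g_i}(\psi(g_i^{-1})) = \psi(g_i)^*$ that is built into the definition of $\nd$, the left-hand side rearranges to the negative of the right-hand side of the master identity. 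This delivers the positivity of $[\gamma(g_i,g_j)]$ required in (ii).

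For the implication (ii)$\,\Rightarrow\,$(i), I would first extract the involution relation from the positivity hypothesis. Since $[\gamma(g,g)]$ is positive, it is self-adjoint; comparing $\gamma(g,g)$ with $\gamma(g,g)^*$ yields $\psi(e) - \psi(e)^* = -\alpha_g\bigl(\psi(e) - \psi(e)^*\bigr)$ for every $g$, and setting $g = e$ forces $\psi(e) \in A_{\rm sa}$. Then self-adjointness of the $2\times 2$ matrix $[\gamma(g_i,g_j)]$ with $g_1=e$ and $g_2=g$ yields, after a short rearrangement, $\alpha_g(\psi(g^{-1})) = \psi(g)^*$. With this identity in hand, given $b_1, \ldots, b_n \in A$ with $\sum_i b_i = 0$, applying (ii) to this tuple makes $S = 0$ in the master identity, so the first three terms on the right-hand side vanish and the stated positivity of $[\gamma(g_i,g_j)]$ collapses precisely to the inequality $\sum_{i,j} b_i^* \alpha_{g_i}(\psi(g_i^{-1}g_j)) b_j \leq 0$ appearing in Definition \ref{negdef}.

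The main obstacle is simply the careful bookkeeping in checking the master identity and in splitting the augmented sum in the forward direction; once these expansions are in place, both directions follow mechanically, and no deeper machinery (such as Hilbert module tools) is required at this stage.
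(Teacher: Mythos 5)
Your proposal is correct and follows essentially the same route as the paper's proof: the ``master identity'' is exactly the paper's expansion after augmenting the tuple with $g_0=e$ and $b_0=-\sum_i b_i$, and the reverse direction extracts $\psi(e)\in A_{\rm sa}$ and $\alpha_g(\psi(g^{-1}))=\psi(g)^*$ from positivity (hence self-adjointness) of small submatrices of $[\gamma(g_i,g_j)]$, just as the paper does with $\gamma(e,e)$ and the $2\times 2$ matrix indexed by $\{e,g\}$. No gaps; the bookkeeping checks out.
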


\begin{proof}
Suppose that $\psi \in ND(\alpha)$ and let $g_1,\ldots,g_n \in G$, 
 $b_1,\ldots,b_n \in A$. Set $b_0 = - \sum_{i=1}^n b_i$ and $g_0 = e$.
Then $\sum_{i=0}^n b_i = 0$, so we get 
$$\sum_{i,j=0}^n b_i^* \alpha_{g_i}\big(\psi(g_i^{-1}g_j)\big) b_j \leq 0 \,. $$
This gives that
$$b_0^*\,\psi(e)\,b_0\,-\Big(\sum_{i=1}^n b_i^*\Big) \sum_{j=1}^n \psi(g_j) b_j\, - \sum_{i=1}^n b_i^* \alpha_{g_i}\big(\psi(g_i^{-1})\big) \Big(\sum_{j=1}^n b_j\Big)
+ \sum_{i,j=1}^n b_i^* \alpha_{g_i}\big(\psi(g_i^{-1}g_j)\big)b_j \leq 0 \,. $$
As $b_0^*\,\psi(e)\,b_0= \sum_{i,j=1}^n b_i^*\,\psi(e)\,b_j$ and $\alpha_{g_i}\big(\psi(g_i^{-1})\big) = \psi(g_i)^*$ for every $i$, this gives that 
$$\sum_{i,j=1}^n b_i^*\gamma(g_i, g_j)b_j=-\sum_{i,j=1}^n b_i^* \Big( \psi(e) -\psi(g_j) - \psi(g_i)^* + \alpha_{g_i}\big(\psi(g_i^{-1}g_j)\big) \Big) b_j \, \geq 0 \,. $$
Thus we have shown that $(ii)$ holds. 

Conversely, suppose that $(ii)$ is true. Note first that $\gamma(e,e) = \psi(e)^* -\psi(e) $. Since $\gamma(e, e)\in A^+$, we get that $\gamma(e,e) = \gamma(e,e)^* =  \psi(e)-\psi(e)^*  = -\gamma(e,e)$, so $\gamma(e,e) = 0$, hence $\psi(e)^*=\psi(e)$. Let now $g\in G$. 
Note that $\gamma(g,e) = 
\psi(g)^*-\alpha_g(\psi(g^{-1}))$,
while $\gamma(e,g) = 
 \psi(e)^*+\psi(g)-\psi(e) -\psi(g)=\psi(e)^*-\psi(e)= 0$. Since 
\[\left[\begin{array}{cc} \gamma(e,e) &\gamma(e,g)\\ \gamma(g,e) &\gamma(g,g)\end{array}\right] \in M_2(A)^+\] 
we have $\gamma(e,g)^*= \gamma(g,e)$. Thus, we get 
$\psi(g)^*-\alpha_g(\psi(g^{-1}))
=\gamma(g,e) = \gamma(e,g)^*= 0$, that is, $\alpha_g(\psi(g^{-1})) = \psi(g)^*$. 

Next, consider 
$g_1,\ldots,g_n \in G$ and $b_1,\ldots,b_n \in A$ with
$\sum_{i=1}^n b_i = 0$. Then 
\begin{align*} 
\sum_{i,j=1}^n & b_i^* \alpha_{g_i}\big(\psi(g_i^{-1}g_j)\big)b_j\\
& = \sum_{i,j=1}^n b_i^* \alpha_{g_i}\big(\psi(g_i^{-1}g_j)\big)b_j
-\Big(\sum_{i=1}^n b_i\Big)^* \sum_{j=1}^n \psi(g_j) b_j 
- \sum_{i=1}^n b_i^* \alpha_{g_i}(\psi(g_i^{-1})) \Big(\sum_{j=1}^n b_j\Big)\\
&=-\Big(\sum_{i,j=1}^n b_i^*\gamma(g_i, g_j)b_j\Big)\leq 0\,,
\quad \text{since} \big[\gamma(g_i,g_j)\big] \in M_n(A)^+\,. 
\end{align*} Thus $\psi \in ND(\alpha)$, 
as desired.
\end{proof}

\begin{remark}\label{mosle} In a very recent work \cite{MM}, Moslehian studies positive and conditionally positive kernels on sets with values in $C^*$-algebras. One easily sees that a function $\psi : G\to A$
 is $\alpha$-negative definite in our sense if and only if the kernel from $G\times G$ into $A$ given by $ K(g,h) = -\,\alpha_g(\psi(g^{-1}h))$ is Hermitian and conditionally positive as defined in \cite{MM}. Proposition \ref{gamma}, which says that $\psi$ is $\alpha$-negative definite if and only if the kernel $\gamma$ is positive, may then be deduced from \cite[Theorem 2.4]{MM}. We have included a self-contained proof of Proposition \ref{gamma} for the ease of the reader. There is otherwise very little overlap between our paper and \cite{MM}.   

\end{remark}

\begin{remark} \label{scalarnegdef}
Let $f: G \to {\mathbb C}$ and consider $\widetilde{f}: G \to A$ defined by $\widetilde{f}(g) = f(g)\,1_A$. If
the function $\widetilde{f}$ is $\alpha$-negative definite, then it is immediate that $f$ is negative definite.  
Conversely,  if $f$ is negative definite, then 
the kernel 
\[F(g,h):=\overline{f(g)} + f(h) - f(e) + f(g^{-1}h) \in \mathbb{C}\] on $G\times G$ is positive (cf.~Proposition \ref{gamma}). But this implies that the kernel $\widetilde{F}(g, h) := F(g,h)\,1_A \in A$ is positive, and Proposition \ref{gamma} gives now that  $\widetilde{f}$ is $\alpha$-negative definite.
\end{remark}

We will let 
$\alpha'$ denote the action of $G$ on  $Z(A)$ obtained by restricting each $\alpha_g$ to $Z(A)$. 
If $\psi \in {\rm ND}(\alpha)$ is  $Z(A)$-valued, then obviously
$\psi \in {\rm ND}(\alpha')$.
Conversely, assume that $\psi \in {\rm ND}(\alpha')$.
 Then $\psi$ is $Z(A)$-valued, and 
$\alpha_g(\psi(g^{-1})) = \alpha'_g(\psi(g^{-1})) = \psi(g)^*$ for every $g \in G$. Moreover, let $\gamma$ be defined as in (\ref{kernel}). Of course, we also have
\[\gamma(g,h) = 
  \psi(g)^* + \psi(h) 
 -\psi(e) - \alpha'_g(\psi(g^{-1}h))
 \]
for all $g, h \in G$. Now, consider $g_1,\ldots,g_n \in G$. Using Proposition \ref{gamma} (with $\alpha'$ instead of $\alpha$), we get that  $\Gamma:=\big[\gamma(g_i,g_j)\big] \in M_n(Z(A))^+$, and this implies that $\Gamma \in M_n(A)^+$. Proposition \ref{gamma} (now with $\alpha$) gives that $\psi \in \nd$.  
 This means that we have:
\begin{proposition}\label{normalized}
One has \, 
${\rm ND}(\alpha')  = \big\{\psi \in \nd \ | \ \psi 
\ \text{is 
$Z(A)$-valued}\big\}.$
\end{proposition}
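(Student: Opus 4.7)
The plan is to handle the two inclusions separately, using Proposition \ref{gamma} as a translation device that reduces the question of $\alpha$-negative definiteness to positivity of the associated kernel $\gamma$.

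For the forward inclusion, I would start with $\psi \in \nd$ taking values in $Z(A)$. Since $\alpha'_g$ is by definition just the restriction of $\alpha_g$ to $Z(A)$, every occurrence of $\alpha_{g_i}(\psi(g_i^{-1}g_j))$ coincides with $\alpha'_{g_i}(\psi(g_i^{-1}g_j))$, so the defining conditions in Definition \ref{negdef} for $(A,G,\alpha)$ and for $(Z(A),G,\alpha')$ are literally the same statement. Here one should verify that it suffices to test the sum condition against elements $b_i \in Z(A)$ rather than arbitrary $b_i \in A$; this is immediate because the condition for arbitrary $b_i \in A$ is a priori stronger than the condition restricted to $b_i \in Z(A)$.

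For the reverse inclusion, suppose $\psi \in \nd(\alpha')$. Then $\psi$ is $Z(A)$-valued, and the equivariance relation $\alpha_g(\psi(g^{-1})) = \alpha'_g(\psi(g^{-1})) = \psi(g)^*$ transfers immediately from $\alpha'$ to $\alpha$. The real content is to check the sum condition against arbitrary $b_1,\ldots,b_n \in A$ with $\sum b_i = 0$. Rather than attack this by hand, I would pass through the kernel $\gamma$ of (\ref{kernel}). Since $\psi$ is $Z(A)$-valued, the kernel computed using $\alpha$ and using $\alpha'$ is the same function $\gamma: G\times G \to Z(A)$. Applying Proposition \ref{gamma} inside the subsystem $(Z(A),G,\alpha')$, I obtain that $[\gamma(g_i,g_j)]$ is positive in $M_n(Z(A))$ for every choice of $g_1,\ldots,g_n \in G$.

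The key step linking the two systems is the trivial but essential observation that $M_n(Z(A))^+ \subseteq M_n(A)^+$, because $M_n(Z(A))$ is a $C^*$-subalgebra of $M_n(A)$ and positivity is preserved under inclusion of $C^*$-algebras. Hence $[\gamma(g_i,g_j)]$ is positive in $M_n(A)$ as well, and applying Proposition \ref{gamma} once more, this time for the original action $\alpha$, yields $\psi \in \nd$. There is no real obstacle here; the argument is essentially a two-step application of Proposition \ref{gamma} bridged by the inclusion $M_n(Z(A))^+ \subseteq M_n(A)^+$, and the only point requiring a moment's thought is verifying that the kernel $\gamma$ is insensitive to whether one uses $\alpha$ or $\alpha'$ when $\psi$ is central-valued.
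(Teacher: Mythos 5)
Your proof is correct and follows essentially the same route as the paper's: the forward inclusion is immediate since the $\alpha'$-conditions are a restriction of the $\alpha$-conditions to $Z(A)$-valued coefficients, and the reverse inclusion is exactly the paper's two-fold application of Proposition \ref{gamma}, bridged by the observation that $M_n(Z(A))^+ \subseteq M_n(A)^+$.
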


\smallskip  Assume that there exists a conditional expectation $E : A \to Z(A)$ satisfying $\alpha'\circ E = E\circ \alpha$. Then the map $\psi \to E\circ \psi$ gives a surjection from $\nd$ onto ${\rm ND}(\alpha')$. This follows readily from the fact that a conditional expectation is completely positive (cf.~\cite{BrOz}). Similarly, if there exists a state $\omega$ on $A$ which is $\alpha$-invariant, then  the map $\psi \to \omega \circ \psi$ gives a surjection from $\nd$ onto ${\rm ND}(G)$, the complex negative definite functions on $G$.
\medskip

We also record the following:

\begin{proposition}\label{commut}
If $\psi \in {\rm ND}(\alpha')$, then  ${\rm Re}\, \psi \in {\rm ND}(\alpha')$. If, in addition, $\psi$ is normalized, then ${\rm Re}\, \psi$ is $Z(A)^+$-valued.

\end{proposition}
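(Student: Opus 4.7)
The plan is to reduce both claims to results already established. For the first claim, I would introduce the pointwise adjoint $\psi^\ast : G \to Z(A)$ defined by $\psi^\ast(g) = \psi(g)^\ast$, show that $\psi^\ast \in {\rm ND}(\alpha')$, and then use the fact that ${\rm ND}(\alpha')$ is a cone to conclude that ${\rm Re}\,\psi = \tfrac{1}{2}(\psi + \psi^\ast)$ lies in ${\rm ND}(\alpha')$.

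To show $\psi^\ast \in {\rm ND}(\alpha')$, I would verify the two requirements of Definition \ref{negdef}. The adjoint condition $\alpha_g(\psi^\ast(g^{-1})) = \psi^\ast(g)^\ast$ is immediate: applying $\ast$ to $\alpha_g(\psi(g^{-1})) = \psi(g)^\ast$ gives exactly this. For the matrix inequality, I would apply Proposition \ref{gamma} (with $\alpha'$ in place of $\alpha$). Let $\gamma$ and $\gamma^\ast$ denote the kernels associated with $\psi$ and $\psi^\ast$ respectively. A direct computation, using that $\psi(e) = \psi(e)^\ast$ (cf.\ Remark \ref{basicnegdef}) and that $\ast$ commutes with $\alpha_g$, yields $\gamma^\ast(g,h) = \gamma(g,h)^\ast$. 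Now, since $\psi \in {\rm ND}(\alpha')$ and $\gamma$ takes values in $Z(A)$, Proposition \ref{gamma} applied to $\alpha'$ tells us that $[\gamma(g_i,g_j)] \in M_n(Z(A))^+$ for any $g_1,\ldots,g_n \in G$. Invoking Lemma \ref{transposed0}, we obtain $[\gamma^\ast(g_i,g_j)] = [\gamma(g_i,g_j)^\ast] \in M_n(Z(A))^+$, and a second application of Proposition \ref{gamma} (with $\alpha'$) gives $\psi^\ast \in {\rm ND}(\alpha')$.

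For the second claim, assume moreover that $\psi(e) = 0$. Then Remark \ref{basicnegdef} directly yields
\[{\rm Re}\,\psi(g) \geq \tfrac{1}{2}\bigl(\psi(e) + \alpha_g(\psi(e))\bigr) = 0\]
for every $g \in G$, so each value ${\rm Re}\,\psi(g)$ lies in $A^+$. Since ${\rm Re}\,\psi$ is clearly $Z(A)$-valued (as $\psi$ is and $Z(A)$ is $\ast$-closed), and since $Z(A)^+ = A^+ \cap Z(A)$, the function ${\rm Re}\,\psi$ is $Z(A)^+$-valued.

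There is no real obstacle here; the only mildly delicate point is the identification $\gamma^\ast(g,h) = \gamma(g,h)^\ast$, which depends on $\psi(e)$ being self-adjoint, a fact guaranteed by $\psi \in {\rm ND}(\alpha)$.
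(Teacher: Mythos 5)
Your proof is correct, and it reaches the conclusion by a somewhat different route than the paper. The paper also splits ${\rm Re}\,\psi$ as $\tfrac12(\psi+\psi^*)$ in effect, but it works directly with the defining quadratic forms: after reducing to the case where $A$ is commutative, it rewrites $\sum_{i,j} b_i^*\alpha_{g_i}(\psi(g_i^{-1}g_j)^*)b_j$ as the adjoint of $\sum_{i,j}(b_i^*)^*\alpha_{g_i}(\psi(g_i^{-1}g_j))b_j^*$ and applies the negative definiteness of $\psi$ to the coefficients $b_i^*$ (which still sum to zero). You instead route the same underlying commutativity argument through the kernel characterization: Proposition \ref{gamma} applied to $\alpha'$, the identity $\gamma^*(g,h)=\gamma(g,h)^*$, and Lemma \ref{transposed0} on the matrix $[\gamma(g_i,g_j)]$. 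The two arguments rest on the same fact (in a commutative algebra the entrywise adjoint of a positive matrix is positive), but your version has the merit of isolating the clean intermediate statement that $\psi\mapsto\psi^*$ preserves ${\rm ND}(\alpha')$, which is the negative definite analogue of Lemma \ref{phistar}, and of reusing machinery already in place rather than redoing the index manipulation. One small inaccuracy in your closing remark: the identity $\gamma^*(g,h)=\gamma(g,h)^*$ does not actually require $\psi(e)$ to be self-adjoint, since the term $-\psi^*(e)=-\psi(e)^*$ matches the corresponding term of $\gamma(g,h)^*$ automatically; this does not affect the validity of the proof. The second claim is handled exactly as in the paper, via Remark \ref{basicnegdef}.
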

\begin{proof} 
We may assume that $A$ is commutative and $\alpha'=\alpha$. So consider $\psi \in \nd$. We have to show that ${\rm Re}\, \psi \in \nd$.

One readily verifies that for each $g\in G$ we have $\alpha_g\big(({\rm Re}\, \psi)(g^{-1})\big) =  
({\rm Re}\, \psi)(g)^*$. Next, consider $g_1,\ldots,g_n \in G$ and $b_1,\ldots,b_n \in A$ with
$\sum_{i=1}^n b_i = 0$.  Then
\[\sum_{i,j=1}^n b_i^*\, \alpha_{g_i}\big(({\rm Re}\, \psi)(g_i^{-1}g_j)\big)\,b_j 
=\frac{1}{2} \sum_{i,j=1}^n b_i^* \alpha_{g_i} \big( \psi(g_i^{-1}g_j)\big)b_j + \frac{1}{2}\sum_{i,j=1}^n b_i^* \alpha_{g_i}  \big(\psi(g_i^{-1}g_j)^*\big)b_j
\]
Since the first term on the right hand-side of this equality is negative, it suffices to show that the second term is also negative. Now, using that $A$ is commutative, we have
\[\sum_{i,j=1}^n b_i^* \alpha_{g_i}  \big(\psi(g_i^{-1}g_j)^*\big)b_j = 
\sum_{i,j=1}^n \big(b_j^* \alpha_{g_i}  \big(\psi(g_i^{-1}g_j)\big)b_i\big)^*
=\Big(\sum_{i,j=1}^n (b_i^*)^* \alpha_{g_i}  \big(\psi(g_i^{-1}g_j)\big)b_j^*\Big)^*
\]
which is negative since $\sum_{i=1}^n b_i^* = 0$ and $\psi \in \nd$. Thus, ${\rm Re}\, \psi \in \nd$. 

Finally, if $\psi \in \ndz$, then Remark \ref{basicnegdef} gives that ${\rm Re}\, \psi$ is $A^+$-valued. 
\end{proof}

In general, we do not know whether  ${\rm Re}\, \psi$ belongs to $\nd$ whenever $\psi \in \nd$.
\begin{remark}
Let $\psi\in \nd$ and assume $\psi$ takes its values in $A^+$ (or in $Z(A)^+$). When $A = \mathbb{C}$, it is known that $\psi^{1/2}$ (or, more generally, $\psi^\beta$ with $0 < \beta < 1$)  is still 
$\alpha$-negative definite, see for example \cite[Corollary 2.10]{BCR}. 
One might wonder whether this holds in general.
The first condition for $\alpha$-negative definitess of  $\psi^{1/2}$ is satisfied since for every $g\in G$ we have $\alpha_g(\psi^{1/2}(g^{-1})) =\alpha_g((\psi(g^{-1}))^{1/2})= \big(\alpha_g(\psi(g^{-1}))\big)^{1/2} =\psi(g)^{1/2} = \psi^{1/2}(g)$. 
However, it is not obvious how to proceed to handle the second condition.
\end{remark}

It is now time to introduce a natural class of normalized $\alpha$-negative definite functions related to $\alpha$-equivariant actions of $G$ on Hilbert $A$-modules and one-cocycles for such actions,
much in the same way as normalized complex negative definite functions on $G$ are related to unitary representations of $G$ on Hilbert spaces and their associated one-cocycles. We recall from \cite{AD1} (see also \cite{Com}) that 
an {\it $\alpha$-equivariant action} $u$ of $G$ on a Hilbert $A$-module $X$ is a  homomorphism $u: g\mapsto u_g$ from $G$ into the group $\mathcal{I}(X)$ of bijective $\mathbb{C}$-linear isometries from $X$ into itself, satisfying:
\begin{itemize}
\item[(i)] $\alpha_g\big(\langle x, y\rangle\big) =  \langle u_g x, u_g y\rangle$, \quad\text{and}
\item[(ii)] $u_g (x\cdot a) = (u_g x)\cdot \alpha_g(a)$,
\end{itemize}
for all $g\in G, \,x,y\in X$, and $a \in A$.

\begin{definition} We will say that  $x \in X$ is $u$-{\it symmetric} if 
$\langle x, u_g x \rangle \in A_{\rm sa}  $ 
for all $g \in G$,
and that it is $u$-{\it central} if 
$\langle  x, u_g x \rangle \in Z(A) $ 
for all $g\in G$.
\end{definition}
It follows easily by using property (i) of $u$ that  $x\in X$ is $u$-symmetric (resp.~$u$-central) if and only if $\langle u_g x, u_h x \rangle$ belongs to $ A_{\rm sa}  $ (resp.~$Z(A)$) for all $g, h \in G$. 

\begin{example}
Let $x \in X$ be $u$-symmetric (resp.~$u$-central) and let $\psi: G \to A^+$ (resp.~$Z(A)^+$) be defined by
$$\psi(g) = \langle u_g x - x, u_g x - x \rangle\,.$$ 
Then $\psi \in \ndz$. 
Indeed, it is clear that $\psi(e) = 0$, and for every $g\in G$  we have
$$\alpha_g\big(\psi(g^{-1}\big)= \alpha_g \big(\langle u_{g^{-1}} x - x, u_{g^{-1}} x - x \rangle\big) 
= \langle x - u_g x, x - u_g x \rangle = \psi(g) = \psi(g)^*\,.$$
 Moreover,
for any $g_1,\ldots, g_n \in G$ and $b_1,\ldots,b_n \in A$ with $\sum_{i=1}^n b_i = 0$, we have
\begin{align*}
\sum_{i,j=1}^n  b_i^* \alpha_{g_i}\big(\psi(g_i^{-1}g_j) \big) b_j
&=\sum_{i,j=1}^n  b_i^* \alpha_{g_i}\big(\langle u_{g_i^{-1}g_j}x - x,u_{g_i^{-1}g_j}x - x \rangle \big) b_j \\
&=  \sum_{i,j=1}^n b_i^* \langle u_{g_j}x - u_{g_i}x,u_{g_j}x - u_{g_i}x \rangle b_j \\
& = \Big(\sum_{i=1}^n b_i^* \Big) \Big(\sum_{j=1}^n \langle u_{g_j}x,u_{g_j}x \rangle b_j \Big) 
- \Big(\sum_{i,j=1}^n b_i^* \langle u_{g_j}x,u_{g_i}x \rangle b_j \Big) \\
& \quad  - \Big(\sum_{i,j=1}^n b_i^* \langle u_{g_i}x,u_{g_j}x \rangle b_j \Big)
+ \Big(\sum_{i=1}^n b_i^* \langle u_{g_i}x,u_{g_i}x \rangle \Big) \Big(\sum_{j=1}^n b_j \Big) \\
& =  - \sum_{i,j=1}^n b_i^* \langle u_{g_j}x,u_{g_i}x \rangle b_j 
- \sum_{i,j=1}^n b_i^* \langle u_{g_i}x,u_{g_j}x \rangle b_j \,. 
\end{align*}
The last expression can be seen to be negative without too much difficulty. As we will show this in the proof of Proposition \ref{cocyclenegdef} in a more general case, we skip the argument. 
\end{example}
We notice that a $u$-symmetric (resp.~$u$-central) vector $x$
gives rise to a symmetric (resp. central) one-cocycle $c : G\to X$ (w.r.t.~$u$), as defined below,
by setting 
 $c(g) = u_g x - x$ for each $g \in G$. 
\begin{definition}
 A map $c: G \to X$ will be called a \emph{one-cocycle} $($w.r.t.~$u$$)$ if it satisfies that
$$c(gh) = c(g) + u_g(c(h))\,,\quad \text{for all}\  g,h \in G \,. $$
Moreover, such a one-cocycle $c$ will be called
\begin{itemize}
\item[(i)]  \emph{symmetric} if   
$\langle c(g),c(h) \rangle \in A_{\rm sa}$ for all $g,h \in G$, or, equivalently, if\\
$$\langle c(g),c(h) \rangle = \langle c(h),c(g) \rangle \quad \text{ for all } \ g,h \in G\,;$$
\item[(ii)] \emph{central} if 
$\langle c(g),c(h) \rangle \in Z(A)$ for all $g,h \in G$.
\end{itemize}
\end{definition}
One-cocycles (wr.r.t.~$u$) of the form $c(g) = u_g x - x $ should be thought of as coboundaries.

\begin{remark}
Assume that $c:G\to X$ is a one-cocycle (w.r.t.~$u$).
For each $g \in G$ one may define a bijective affine map $a_g: X \to X$ by $$a_g x = u_g x + c(g)\,.$$ Then 
each $a_g$ is isometric in the sense that
$\|a_g x - a_g y \| = \|x - y\|$ for all $x,y \in X$, and one easily checks that  $a_{gh} = a_g \,a_h$ for all $g,h \in G$.
Hence $g \mapsto a_g$ is a homomorphism from $G$ into the group of affine isometric bijections 
from $X$ into itself.
\end{remark}
\begin{proposition}\label{cocyclenegdef}
 Let $c: G \to X$ be a one-cocycle $($w.r.t.~$u$$)$
and suppose that $c$ is symmetric $($resp.~central$)$.
Let $\psi:G\to A^+$ $($resp. $Z(A)^+$$)$ be defined  for each $g\in G$ by
$$\psi(g) = \langle c(g),c(g) \rangle\,.$$ 
Then $\psi \in \ndz$.
\end{proposition}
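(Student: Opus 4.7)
The plan is to verify directly the three requirements for $\psi$ to belong to $\ndz$: that $\psi(e) = 0$, that $\alpha_g(\psi(g^{-1})) = \psi(g)^*$ for all $g \in G$, and the negativity inequality from Definition \ref{negdef}.

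First I would use the cocycle identity at $(e,e)$ to get $c(e) = c(e) + u_e(c(e)) = 2c(e)$, hence $c(e) = 0$ and $\psi(e) = 0$. Applying the cocycle identity at $(g, g^{-1})$ then yields $u_g(c(g^{-1})) = -c(g)$, and property (i) of the $\alpha$-equivariant action gives
\[\alpha_g(\psi(g^{-1})) = \langle u_g c(g^{-1}), u_g c(g^{-1})\rangle = \langle c(g), c(g) \rangle = \psi(g) = \psi(g)^*.\]

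For the main inequality, I would fix $g_1, \dots, g_n \in G$ and $b_1, \dots, b_n \in A$ with $\sum_i b_i = 0$. Using the cocycle identity in the form $u_{g_i}(c(g_i^{-1}g_j)) = c(g_j) - c(g_i)$ together with property (i), one can rewrite
\[\alpha_{g_i}\bigl(\psi(g_i^{-1}g_j)\bigr) = \langle c(g_j) - c(g_i), c(g_j) - c(g_i) \rangle.\]
Expanding the four resulting terms and exploiting $(\sum_i b_i)^* = 0 = \sum_j b_j$ to annihilate the two ``diagonal'' pieces where the indices $i,j$ appear only through $c(g_j)$ or only through $c(g_i)$, the double sum should collapse to
\[\sum_{i,j=1}^n b_i^*\, \alpha_{g_i}\bigl(\psi(g_i^{-1}g_j)\bigr)\, b_j \;=\; -\sum_{i,j=1}^n b_i^* \langle c(g_i), c(g_j)\rangle b_j \;-\; \sum_{i,j=1}^n b_i^* \langle c(g_j), c(g_i)\rangle b_j.\]
The first sum on the right equals $\langle \sum_i c(g_i)\cdot b_i,\, \sum_j c(g_j)\cdot b_j\rangle \in A^+$, hence is non-negative.

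The main obstacle is to argue that the second (``transposed'') sum is likewise non-negative. In the symmetric case, self-adjointness of $\langle c(g_i), c(g_j)\rangle$ gives $\langle c(g_j), c(g_i)\rangle = \langle c(g_i), c(g_j)\rangle^* = \langle c(g_i), c(g_j)\rangle$, so the second sum coincides with the first and the argument closes. In the central case, the Gram matrix $[\langle c(g_i), c(g_j)\rangle]$ lies in $M_n(Z(A))^+$, and Lemma \ref{transposed} then yields that the transposed matrix $[\langle c(g_j), c(g_i)\rangle]$ is still positive in $M_n(Z(A))$, so the second sum is again non-negative. In either case, both terms on the right-hand side are non-positive, whence $\psi \in \ndz$.
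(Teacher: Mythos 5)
Your proof is correct and follows essentially the same route as the paper: the same verification of $c(e)=0$ and of $\alpha_g(\psi(g^{-1}))=\psi(g)^*$, the same reduction of the double sum to $-\sum_{i,j} b_i^*\langle c(g_i),c(g_j)\rangle b_j - \sum_{i,j} b_i^*\langle c(g_j),c(g_i)\rangle b_j$, the same treatment of the symmetric case, and the same appeal to Lemma \ref{transposed} in the central case. The only (harmless) difference is that in the central case the paper additionally invokes Lemma \ref{schur} on the Schur product with $[b_i^*b_j]$, whereas you conclude directly from positivity of the transposed matrix $[\langle c(g_j),c(g_i)\rangle]$ together with the standard fact that $\sum_{i,j} b_i^* M_{ij} b_j \geq 0$ whenever $[M_{ij}]\in M_n(A)^+$.
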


\begin{proof}
One has $c(g) = c(ge) = c(g) + u_g(c(e))$, thus $c(e) = 0$ and it follows at once that $\psi(e) = 0$.
Now, $0 = c(e) = c(g^{-1}g) = c(g^{-1}) + u_{g^{-1}}(c(g))$, that is 
$c(g^{-1}) = - u_{g^{-1}}(c(g))$ and therefore
$$\alpha_g(\psi(g^{-1})) =  \alpha_g \big(\langle c(g^{-1}),c(g^{-1}) \rangle\big) = \langle u_g(c(g^{-1})),u_g(c(g^{-1})) \rangle
= \langle c(g), c(g) \rangle = \psi(g)=\psi(g)^*$$ for every $g \in G$. 
Now, for $g,h \in G$, we have
\begin{align*}
\langle c(g^{-1}h), c(g^{-1}h) \rangle 
& = \big\langle c(g^{-1}) + u_{g^{-1}}(c(h)), c(g^{-1}) + u_{g^{-1}}(c(h)) \big\rangle \\
& =\big\langle u_{g^{-1}}(c(h) - c(g)), u_{g^{-1}}(c(h)-c(g)) \big\rangle \,,
\end{align*}
so we get
$$\alpha_g(\psi(g^{-1}h)) = \alpha_g \big(\langle c(g^{-1}h), c(g^{-1}h) \rangle \big) = \langle c(h) - c(g),c(h)-c(g) \rangle \,.$$
Hence, 
for any given $g_1,\ldots, g_n \in G$ and $b_1,\ldots,b_n \in A$ with $\sum_{i=1}^n b_i = 0$, we have
$$
\sum_{i,j = 1}^n b_i^* \alpha_{g_i}\big(\psi(g_i^{-1}g_j) \big) b_j
= \sum_{i,j = 1}^n b_i^* \big\langle c(g_j) - c(g_i), c(g_j) - c(g_i) \big\rangle b_j\, .
$$
The last sum above is negative if $c$ is symmetric or central. 
Indeed, if $c$ is symmetric, then 
$$\sum_{i,j = 1}^n b_i^* \big\langle c(g_j) - c(g_i), c(g_j) - c(g_i) \big\rangle b_j 
= - 2 \sum_{i,j=1}^n b^*_i \langle c(g_i), c(g_j) \rangle b_j \,, $$ 
which is negative since the matrix $[\langle c(g_i), c(g_j) \rangle]$ is positive (cf.~\cite{La1}).
If $c$ is central, then
$$\sum_{i,j = 1}^n b_i^* \big\langle c(g_j) - c(g_i), c(g_j) - c(g_i) \big\rangle b_j 
= - \sum_{i,j=1}^n b^*_i \langle c(g_i), c(g_j) \rangle b_j - \sum_{i,j=1}^n b^*_i  b_j \langle c(g_j), c(g_i) \rangle$$ 
which is seen to be negative by using Lemma  \ref{schur} and Lemma \ref{transposed}.
\end{proof}

A well known result of Delorme and Guichardet  \cite{Del, Gui} says that any  normalized negative definite function $f: G \to {\mathbb R}^+$ 
can be written in the form $f(s) = \|c(s)\|^2$ for a suitable unitary representation $\pi$ of $G$ on a Hilbert space $H$
and a one-cocycle $c$ for $\pi$, i.e., a map $c: G \to H$ satisfying $c(gh) = c(g) + \pi_g\big(c(h)\big)$ for all $g,h \in G$. In our context, as a converse to Proposition \ref{cocyclenegdef}, we have the following analogous result:

\begin{theorem}\label{cocyclerep}
Let $\psi: G \to A^+$ be a normalized $\alpha$-negative definite function.
Then there exists  a Hilbert $A$-module $X$, an $\alpha$-equivariant action $u$ of $G$ on $X$ and a symmetric one-cocycle 
$c: G \to X$ $($w.r.t. $u$$)$ such that $$\psi(g) = \langle c(g),c(g) \rangle$$ for all $g \in G$. 
Moreover, the $A$-submodule of $X$ generated by the $c(g)$'s is dense in $X$. 
Finally, if $\psi$ takes values in $Z(A)^+$, then $c$ is also central.
\end{theorem}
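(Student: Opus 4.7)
The plan is to adapt the classical GNS/Delorme--Guichardet construction to the $C^*$-module setting, using Proposition \ref{gamma} to guarantee the necessary positivity. Let $V$ denote the space of finitely supported functions $f : G \to A$ satisfying $\sum_{g \in G} f(g) = 0$, regarded as a right $A$-module. If $\delta_g$ denotes the function equal to $1_A$ at $g$ and $0$ elsewhere, then $V$ is the right $A$-linear span of the vectors $\{\delta_g - \delta_e : g \in G\}$.

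First I would equip $V$ with the $A$-valued sesquilinear form
\[
\langle f_1, f_2 \rangle_0 \ = \ -\,\tfrac{1}{2}\sum_{g, h \in G} f_1(g)^* \, \alpha_g\!\left(\psi(g^{-1}h)\right) f_2(h),
\]
the factor $\tfrac{1}{2}$ being chosen so that the cocycle formula comes out correctly. The defining inequality of $\nd$ (Definition \ref{negdef}), applied precisely to tuples $(b_i)$ summing to zero, says exactly that $\langle f, f \rangle_0 \geq 0$ for $f \in V$, and the identity $\alpha_g(\psi(g^{-1})) = \psi(g)^*$ gives conjugate-symmetry of the form. Quotienting $V$ by the subspace $N = \{f \in V : \langle f,f\rangle_0 = 0\}$ (well-defined by the Cauchy--Schwarz inequality for $A$-valued semi-inner products, cf.~\cite{La1}) and completing in the natural norm, I obtain a Hilbert $A$-module $X$.

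Next I would define $u_g : V \to V$ by $(u_g f)(h) = \alpha_g(f(g^{-1}h))$. A straightforward change of variables $h \mapsto gh$, $k \mapsto gk$ in the defining sum yields
\[
\langle u_g f_1, u_g f_2 \rangle_0 \ = \ \alpha_g\!\left(\langle f_1, f_2 \rangle_0\right),
\]
so $u_g$ preserves $N$ and extends to an isometric $\mathbb{C}$-linear bijection of $X$. Property (ii) of an $\alpha$-equivariant action, namely $u_g(f \cdot a) = (u_g f) \cdot \alpha_g(a)$, is immediate from the definition, and the homomorphism property $u_g u_h = u_{gh}$ follows from the cocycle law for $\alpha$. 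Setting $c(g) = [\delta_g - \delta_e] \in X$, the computation $u_g(\delta_h - \delta_e) = \delta_{gh} - \delta_g$ gives the cocycle identity $c(gh) = c(g) + u_g c(h)$. A direct calculation using $\psi(e) = 0$ and $\psi(g) = \psi(g)^*$ produces
\[
\langle c(g), c(h) \rangle \ = \ \tfrac{1}{2}\bigl(\psi(g) + \psi(h) - \alpha_g(\psi(g^{-1}h))\bigr),
\]
so in particular $\langle c(g), c(g)\rangle = \psi(g)$. Density of the $A$-submodule generated by $\{c(g)\}$ is built in: that module contains all $\delta_g - \delta_e$ and hence all of $V$, which is dense in $X$ by construction. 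Symmetry of $c$ is visible from the displayed formula, since each of the three summands lies in $A_{\mathrm{sa}}$; when $\psi$ is $Z(A)^+$-valued, those three summands lie in $Z(A)$, giving centrality.

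The main obstacle is really only bookkeeping: verifying that the sesquilinear form is positive semidefinite with the correct normalization (so that one recovers $\psi(g)$, not a multiple of it), and that the translation-and-twist operator descends to the quotient as an isometry. Both amount to unpacking the definition of $\alpha$-negative definiteness for the appropriate tuples, and neither requires any genuinely new ingredient beyond Proposition \ref{gamma} and the standard facts about Hilbert $C^*$-modules recorded in \cite{La1}.
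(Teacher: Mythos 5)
Your construction is correct, and every computation you outline goes through; by the uniqueness remark following the theorem, the module you build is unitarily equivalent to the one in the paper. The skeleton (a GNS-type completion of finitely supported $A$-valued functions, with $c(g)$ coming from delta functions) is the same, but you deviate in two places worth recording. First, the paper works on all of $C_c(G,A)$ with the Gram kernel $\gamma(g,h)=\frac{1}{2}\big(\psi(g)+\psi(h)-\alpha_g(\psi(g^{-1}h))\big)$, whose positivity is Proposition \ref{gamma}; you restrict to the mean-zero subspace $V$ and use the raw kernel $-\frac{1}{2}\,\alpha_g(\psi(g^{-1}h))$, whose positivity on $V$ is literally the second condition of Definition \ref{negdef}. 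On $V$ the two kernels induce the same form (the terms $\psi(g)$, $\psi(h)$, $\psi(e)$ in $\gamma$ are annihilated by the zero-sum condition), so nothing is lost, and you bypass Proposition \ref{gamma} for positivity even though you still need its content implicitly when identifying $\langle c(g),c(h)\rangle$ with $\gamma(g,h)$. Second, and more usefully, you define $u_g$ by the global formula $(u_gf)(h)=\alpha_g(f(g^{-1}h))$ on the function space, so $u_g$ visibly preserves $V$, the identities $u_gu_h=u_{gh}$ and $u_g(f\cdot a)=(u_gf)\cdot\alpha_g(a)$ are immediate, and the equivariance $\langle u_gf_1,u_gf_2\rangle_0=\alpha_g(\langle f_1,f_2\rangle_0)$ is a one-line change of variables; the paper instead defines $u_g$ only on the submodule generated by the $c(g)$'s via the cocycle relation and must verify well-definedness through the inner-product computation $\langle c(gh)-c(g),\,c(gh')-c(g)\rangle=\alpha_g\big(\langle c(h),c(h')\rangle\big)$. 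One small point you state without justification, but which does hold: the self-adjointness of $\alpha_g(\psi(g^{-1}h))$, needed for the symmetry of $c$, follows because $\psi$ is $A^+$-valued and $\alpha_g$ is a $*$-automorphism.
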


\begin{proof}
For every $(g,h) \in G\times G$, we set
$$\gamma(g,h) = \frac{1}{2} \Big(\psi(g) + \psi(h) - \alpha_g(\psi(g^{-1}h))\Big) \in A_{\rm sa}\,.$$ 
Note that since $\psi(e) = 0$, this agrees with the expression for $\gamma(g,h)$ given in (\ref{kernel}), except for the normalization factor $1/2$. Since $\psi$ is $\alpha$-negative definite, we have that 
$\alpha_g(\psi(g^{-1}h)) = \alpha_h\big(\alpha_{h^{-1}g}(\psi((h^{-1}g)^{-1}))\big)= \alpha_h(\psi(h^{-1}g))$
and it readily follows from this equality that $\gamma(g,h) = \gamma(h,g)$ for all $g,h \in G$.
Moreover, according to Proposition \ref{gamma}, we have 
$$\sum_{i,j=1}^n b_i^* \gamma(g_i,g_j) b_j \geq 0$$
for all $g_1,\ldots,g_n \in G$ and $b_1,\ldots, b_n \in A$.

Let now $X_0 := C_c(G,A)$ denote the space of all $A$-valued, finitely supported  functions on $G$. We can then define a right action of $A$ on $X_0$
by $(f \cdot a)(g) = f(g)a$ for every $f \in X_0$ and every $a \in A$,
and 
an $A$-valued semi-inner product on $X_0$ by
$$\langle f_1, f_2 \rangle_0 := \sum_{g,h \in G} f_1(g)^* \gamma(g,h) f_2(h) \,. $$
As usual, setting $N = \{f \in X_0 \ | \ \langle f, f \rangle_0 = 0\}$ and defining 
$$\langle f_1 + N, f_2 +N \rangle := \langle  f_1, f_2 \rangle_0 \,, $$
$X_0 / N$ becomes an inner product $A$-module. We let $X$ be its Hilbert $A$-module completion and identify $X_0/N$ with its canonical image in $X$.

Next, we define $c: G \to X$ by
$$c(g) := (\delta_g \odot 1_A) + N \quad \text{for each }  g \in G \,, $$
where $\delta_g \odot 1_A$ denotes the function in $X_0$ which takes the value $1_A$ at $g$ and is zero otherwise.   
Then we clearly have that  $X_0/N = {\rm Span}\big\{c(g) \cdot a \ | \ g \in G, a \in A \big\}$, 
so the $A$-submodule of $X$ generated by the $c(g)$'s is dense in $X$.
We also note that 
\begin{equation}\label{cgamma}
\langle c(g),c(h) \rangle = \langle \delta_g \odot 1_A, \delta_h \odot 1_A \rangle_0 =  \gamma(g,h)
\end{equation}
 for all $g,h \in G$, which immediately yields that $c$ is symmetric. This also gives that 
 $\langle c(e),c(e)\rangle = \gamma(e,e) = \psi(e)/2 =0$, so that $c(e) = 0$.
Moreover, using (\ref{cgamma}), we get that for all $g,h,h' \in G$, 
\begin{align*}
\big\langle c(gh) - c(g) &, c(gh') - c(g) \big\rangle =  \gamma(gh,gh') -  \gamma(gh,g) - \gamma(g,gh') + \gamma(g,g) \\
& = \frac{1}{2}\Big[ \psi(gh) + \psi(gh') - \alpha_{gh}(\psi((gh)^{-1}gh')) - \psi(gh) - \psi(g)  
+ \alpha_{gh}(\psi((gh)^{-1}g)) \\
& \quad \quad - \psi(g) - \psi(gh') + \alpha_g(\psi(g^{-1}gh')) + 2\, \psi(g)\Big] \\
& = \frac{1}{2}\, \alpha_g\Big( - \alpha_h(\psi(h^{-1}h')) + \alpha_h(\psi(h^{-1})) + \psi(h')\Big) = \alpha_g\big(\gamma(h,h') \big)\\
& = \alpha_g\Big(\big\langle c(h),c(h') \big\rangle \Big) \,. 
\end{align*}
Consider now $g,g_1,\ldots,g_n,g'_1,\ldots,g'_m \in G$, $a_1,\ldots,a_n, a'_1,\ldots,a'_m \in A$, and
$$
F = \sum_{i=1}^n c(g_i) \cdot a_i\,, \quad F' = \sum_{j=1}^m c(g'_j) \cdot a'_j \,,$$ 
$$U = \sum_{i=1}^n \big(c(gg_i) - c(g)\big)\cdot \alpha_g(a_i)\,,  \quad U' = \sum_{j=1}^m \big(c(gg'_j) - c(g)\big)\cdot \alpha_g(a'_j) $$
in $X_0/N$.
Then, using our previous observation, we get
\begin{align*}
\langle U , U' \rangle & = \sum_{i,j} \big\langle (c(gg_i)-c(g)) \cdot \alpha_g(a_i), (c(gg'_j)-c(g))\cdot \alpha_g(a'_j) \big\rangle \\
& = \sum_{i,j} \alpha_g(a_i)^* \big\langle c(gg_i)-c(g), c(gg'_j)-c(g)\big\rangle \alpha_g(a'_j) \\
& = \sum_{i,j} \alpha_g(a_i)^* \, \alpha_g \big(\langle c(g_i), c(g'_j)\rangle\big) \, \alpha_g(a'_j)  \\
& = \alpha_g\Big(\sum_{i,j} \langle c(g_i) \cdot a_i, c(g'_j) \cdot a'_j \rangle \Big) \\
& = \alpha_g(\langle F, F' \rangle) \ . 
 \end{align*}
Hence, $\|U\| = \| \langle U,U \rangle \|^{1/2}_A = \| \alpha_g(\langle F,F \rangle) \|^{1/2}_A = \|\langle F,F \rangle \|^{1/2}_A = \|F\|$.

As $X_0/N = {\rm Span}\big\{c(g) \cdot a \ | \ g \in G, a \in A \big\}$, we see that, for each $g \in G$, the map
$u_g: X_0/N \to X_0/N$ given by 
$$u_g \Big(\sum_{i=1}^n c(g_i) \cdot a_i\Big) = \sum_{i=1}^n \big(c(gg_i) - c(g) \big) \cdot \alpha_g(a_i)$$
is well-defined, isometric and satisfies $\langle u_g F, u_g F' \rangle = \alpha_g\big(\langle F,F' \rangle\big)$ for all $F,F' \in X_0/N$.
It therefore extends to an isometry on $X$, that we also denote by $u_g$, satisfying 
$$\langle u_g x,u_g y \rangle = \alpha_g(\langle x,y\rangle)$$ 
for all $x,y \in X$ (by continuity).
For $F$ as above and $a \in A$, we have
\begin{align*} u_g(F \cdot a) & = u_g\Big(\sum_{i=1}^n c(g_i) \cdot (a_i a)\Big) \\
& = \sum_{i=1}^n \big(c(gg_i) - c(g)\big) \cdot \alpha_g(a_i a) = (u_g F) \cdot \alpha_g(a) \,.
\end{align*}
Therefore, $u_g(x \cdot a) = u_g(x) \cdot \alpha_g(a)$ for all $x \in X$ and $a \in A$ (by continuity).

Consider now $g,h \in G$. For every $k \in G$ and $a \in A$, we have
\begin{align*}
(u_g u_h)(c(k) \cdot a) & = u_g\big((c(hk)-c(h)) \cdot \alpha_h(a) \big) \\
& = \big(c(ghk) - c(g)\big) \cdot \alpha_g(\alpha_h(a)) - \big(c(gh)-c(g)\big) \alpha_g(\alpha_h(a)) \\
& = (c(ghk) - c(gh)) \cdot \alpha_{gh}(a) = u_{gh}(c(k) \cdot a) \,. 
\end{align*}
Thus, by linearity, density and continuity, we get that $u_g u_h = u_{gh}$.
In particular, $$u_g u_{g^{-1}} = u_{g^{-1}} u_g = u_e = {\rm id}_X$$
(since $u_e(c(k) \cdot a) = (c(k) - c(e)) \cdot a = c(k) \cdot a$, as 
$c(e)=0$).
Hence each $u_g$ is invertible.

Altogether, we have shown that $u: g \mapsto u_g$ is an $\alpha$-equivariant action of $G$ on $X$.

Finally, by the definition of $u$, for all $g,h \in G$, we have $$u_g(c(h)) = \big(c(gh)-c(g)\big) \cdot \alpha_g(1_A) = c(gh)-c(g)\,.$$
So $c$ is a symmetric one-cocycle (w.r.t.~$u$). Since $\langle c(g),c(g) \rangle = \gamma(g,g) = \psi(g)$ for all $g \in G$,
we are done with the first two assertions of the theorem. 

If $\psi$ is assumed to be $Z(A)^+$-valued, then we see from (\ref{cgamma}) that $\langle c(g), c(h)\rangle$ belongs to $Z(A)$ for all $g,h\in G$, i.e., $c$ is central. 
\end{proof}

 Theorem \ref{cocyclerep} may probably be generalized to give a representation of any $\alpha$-negative definite function (see \cite{Del, Gui} for the classical case). However, for the time being
 we leave this as an open problem. 
 
\begin{remark}
The triple $(X,u,c)$ associated to $\psi$ in the previous theorem is unique in the following sense.
If $X'$ is another Hilbert $A$-module, equipped with an $\alpha$-equivariant action $u'$ of $G$ 
and a symmetric one-cocycle $c': G \to X'$ (w.r.t. $u'$) such that $\psi(g) = \langle c'(g),c'(g) \rangle'$ for all $g \in G$ and the $A$-submodule
of $X'$ generated by the $c'(g)$'s is dense in $X'$, then there exists a unitary operator $V$ from $X$ to $X'$ satisfying
$V u_g V^* = u'_g$ and $Vc(g) = c'(g)$
for all $g \in G$. 

To see this, the main observation is that we have
$\langle c(g),c(h) \rangle = \langle c'(g),c'(h) \rangle'$ 
for all $g,h \in G$; indeed,
\begin{align*}
2 \langle c(g),c(h) \rangle & = \psi(g) + \psi(h) - \alpha_g(\psi(g^{-1}h)) \\
& = \langle c'(g),c'(g) \rangle' + \langle c'(h),c'(h) \rangle' - \langle u'_g(c'(g^{-1}h)), u'_g(c'(g^{-1}h)) \rangle' \\
& = \langle c'(g),c'(g) \rangle' + \langle c'(h),c'(h) \rangle'  - \langle c'(h) - c'(g), c'(h) - c'(g) \rangle' \\
& = \langle c'(h),c'(g) \rangle' + \langle c'(g),c'(h) \rangle' \\
& = 2 \langle c'(g),c'(h) \rangle \,.  
\end{align*}
It is then easy to check that the map $V: X \to X'$ determined by 
$$V \Big(\sum_i c(g_i) \cdot a_i \Big) = \sum_i c'(g_i) \cdot a_i, \quad g_i \in G, a_i \in A$$ 
will do the job.
\end{remark}

\begin{remark}
It follows readily from Proposition \ref{cocyclenegdef} and Theorem \ref{cocyclerep} that the cone of $A^+$-valued normalized $\alpha$-negative definite
coincides with the set of functions of the form $g\mapsto \langle c(g), c(g)\rangle$ where $c$ ranges over all symmetric one-cocycles (with respect to $\alpha$-equivariant actions of $G$). Similarly, the subcone of $Z(A)^+$-valued normalized $\alpha$-negative definite
coincides with the set of functions of the form $g\mapsto \langle c(g), c(g)\rangle$  where $c$ ranges either over all symmetric and central one-cocycles,  or over all central one-cocycles (with respect to $\alpha$-equivariant actions of $G$).
 \end{remark}

\begin{remark}
Consider a function $\psi: G \to A^+$ given by $\psi(g) = \langle c(g),c(g) \rangle$ 
for some $\alpha$-equivariant action $u$ of $G$ on Hilbert $A$-module $X$ and a one-cocycle $c: G \to X$ (w.r.t.~$u$). 
Such a function will satisfy the first requirement, but not necessarily  the second, in the definition of $\alpha$-negative definiteness.
Instead of the second requirement, it will satisfy
$$\sum_{i,j=1}^n b_i^* \alpha_{g_i}\big(\psi(g_i^{-1}g_j)\big)b_j \leq 0 $$
for any $g_1,\ldots,g_n \in G$ and $b_1,\ldots,b_n \in Z(A)$ with $\sum_{i=1}^n b_i = 0$.
It might be worth to have a closer look at this class of functions in the future.
\end{remark}

A well known consequence of Schoenberg's theorem (see e.g.~\cite{BCR, BHV}) is that a function $\psi: G \to {\mathbb C}$ is negative definite
if and only if the function $\varphi_t:=e^{-t\psi}$ is positive definite for all $t > 0$. 
We now proceed to show that a version of this result continues to hold in our generalized setting, 
at least for central-valued functions. 
\begin{theorem}\label{schoenberg}
Let $\psi: G \to A$ and
consider the following two claims:
\begin{itemize}
\item[$(i)$] $\psi$ is $\alpha$-negative definite;
\item[$(ii)$] $e^{-t \psi}$ is $\alpha$-positive definite for all $t > 0$.
\end{itemize}
Then $(ii)$ implies $(i)$. \\
Moreover, suppose that $\psi$ is $Z(A)$-valued
 Then $(i)$ implies $(ii)$.
Thus, in this case,
the two claims above are equivalent.
\end{theorem}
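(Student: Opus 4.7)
For the implication $(ii) \Rightarrow (i)$, the plan is to extract $\psi$ from $e^{-t\psi}$ by differentiating at $t = 0$. Writing $\varphi_t := e^{-t\psi}$, the hypothesis says that $\varphi_t$ is $\alpha$-positive definite for every $t > 0$; relation (\ref{pdh}) then reads $e^{-t\alpha_g(\psi(g^{-1}))} = e^{-t\psi(g)^*}$, and comparing first-order terms in $t$ at $t=0$ will yield $\alpha_g(\psi(g^{-1})) = \psi(g)^*$. For the negative definiteness inequality, I would fix $g_1,\ldots,g_n \in G$ and $b_1,\ldots,b_n \in A$ with $\sum_i b_i = 0$ and use the expansion $e^{-t\psi(g)} = 1_A - t\psi(g) + O(t^2)$ to write
\[ \sum_{i,j} b_i^* \alpha_{g_i}\big(e^{-t\psi(g_i^{-1}g_j)}\big) b_j = \Big(\sum_i b_i\Big)^*\Big(\sum_j b_j\Big) - t \sum_{i,j} b_i^* \alpha_{g_i}\big(\psi(g_i^{-1}g_j)\big) b_j + O(t^2). \]
The zeroth-order term vanishes because $\sum_i b_i = 0$, so dividing the (nonnegative) left-hand side by $t>0$ and sending $t \to 0^+$ will give the required inequality.

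For $(i) \Rightarrow (ii)$ under the hypothesis that $\psi$ is $Z(A)$-valued, my plan is to use Proposition \ref{gamma} to rewrite
\[ \alpha_{g_i}\big(\psi(g_i^{-1}g_j)\big) = \psi(g_i)^* + \psi(g_j) - \psi(e) - \gamma(g_i, g_j), \]
and exploit that every term lies in the commutative $C^*$-algebra $Z(A)$. This commutativity splits the exponential into $e^{-t\alpha_{g_i}(\psi(g_i^{-1}g_j))} = e^{t\psi(e)}\, e^{-t\psi(g_i)^*}\, e^{-t\psi(g_j)}\, e^{t\gamma(g_i,g_j)}$, so that, setting $D := \operatorname{diag}\big(e^{-t\psi(g_i)^*}\big)$, I will obtain the matrix factorization
\[ \big[\alpha_{g_i}\big(e^{-t\psi(g_i^{-1}g_j)}\big)\big] = D \cdot \Big(e^{t\psi(e)}\,\big[e^{t\gamma(g_i,g_j)}\big]\Big) \cdot D^*. \]
The matrix $[\gamma(g_i,g_j)]$ is positive in $M_n(Z(A))$ by Proposition \ref{gamma}, so Lemma \ref{schurexp} will give positivity of the Schur exponential $[e^{t\gamma(g_i,g_j)}]$. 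By Remark \ref{basicnegdef} combined with the $Z(A)$-valued hypothesis, $\psi(e) \in Z(A)_{\rm sa}$, hence $e^{t\psi(e)}$ is a positive central scalar whose multiplication preserves positivity; conjugation by $D$ will then yield a positive matrix in $M_n(Z(A)) \subseteq M_n(A)$, as desired.

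The main obstacle, and the reason $(i) \Rightarrow (ii)$ is stated only in the central case, is precisely this factorization of the exponential: once the values $\psi(g)$ and the entries of $\gamma$ cease to commute, $e^{-t\alpha_g(\psi(g^{-1}h))}$ no longer splits into a product of exponentials of the simpler pieces, and the Schur exponential technology is not directly applicable. By contrast, $(ii) \Rightarrow (i)$ will not suffer from this restriction, since the relevant information is already visible in the linear term of the exponential expansion, before any non-commutativity can cause trouble.
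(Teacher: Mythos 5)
Your proposal is correct and follows essentially the same route as the paper. For $(i)\Rightarrow(ii)$ the decomposition is identical: the paper forms the Schur product of $\bigl[e^{t\gamma(g_i,g_j)}\bigr]$ (positive by Proposition \ref{gamma} and Lemma \ref{schurexp}) with the positive matrix $[b_i^*b_j]$, where $b_i=e^{\frac{1}{2}\psi(e)-\psi(g_i)}$, and invokes Lemma \ref{schur}; since the $b_i$ are central, that Schur product is literally your conjugation by the diagonal matrix $D$ together with the scalar factor $e^{t\psi(e)}$, so the two presentations coincide. The one place you genuinely deviate is in $(ii)\Rightarrow(i)$: the paper applies an arbitrary state $\omega$, takes the right derivative at $t=0$ of the resulting nonnegative scalar function, and then must separately verify (using $\alpha_g(\psi(g^{-1}))=\psi(g)^*$) that $\sum_{i,j}b_i^*\alpha_{g_i}\bigl(\psi(g_i^{-1}g_j)\bigr)b_j$ is self-adjoint before it can pass from ``$\omega(x)\ge 0$ for all states'' to $x\ge 0$. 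You instead divide the positive element $\sum_{i,j}b_i^*\alpha_{g_i}\bigl(e^{-t\psi(g_i^{-1}g_j)}\bigr)b_j$ by $t$ and take the norm limit as $t\to 0^+$ inside the closed cone $A^+$ (the remainder being $O(t)$ after division, uniformly over the finitely many group elements involved), which yields the inequality directly and renders the self-adjointness check unnecessary. Both arguments are valid; yours is marginally more economical on this point.
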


\begin{proof}
Assume that $(ii)$ holds. Let $g\in G$. From \cite[Proposition 2.4]{AD1} we get that
$$\alpha_g\big(e^{-t\psi(g^{-1})}\big) = \big(e^{-t\psi(g)}\big)^*\,, $$
hence 
$$e^{-t\alpha_g(\psi(g^{-1}))} = e^{-t\psi(g)^*} $$
for all $t > 0$. Then \cite[Theorem VIII.1.2]{DS} gives that 
\begin{equation}\label{first}
\alpha_g(\psi(g^{-1})) = \psi(g)^*\,.
\end{equation}
Next, suppose $g_1, \ldots, g_n \in G$, $b_1, \ldots, b_n \in A$ with $\sum_{i=1}^n b_i = 0$ and let $\omega$ be a state on $A$.
By assumption, the scalar-valued function
$${\mathbb R} \ni t \mapsto \omega\Big(
\sum_{i,j=1}^n b_i^* \alpha_{g_i}\big(e^{-t\psi(g_i^{-1}g_j)}\big) b_j
\Big)$$ 
is non-negative for $t>0$ and vanishes at $t=0$. Thus
its right-derivative at $t=0$ must be non-negative, i.e.,
\begin{equation}\label{omeganeg}
- \omega\Big(\sum_{i,j=1}^n b_i^* \alpha_{g_i}\big(\psi(g_i^{-1}g_j) \big) b_j\Big) \geq 0\,.
\end{equation}
Now, using (\ref{first}) with $g=g_i^{-1}g_j$, we get
\begin{align*}\Big(\sum_{i,j=1}^n b_i^* \alpha_{g_i}\big(\psi(g_i^{-1}g_j) \big) b_j\Big)^* &= 
\sum_{i,j=1}^n b_j^* \alpha_{g_i}\big(\psi(g_i^{-1}g_j)^* \big) b_i \\
&= \sum_{i,j=1}^n b_j^* \alpha_{g_i}\big(\alpha_{g_i^{-1}g_j}\big(\psi(g_j^{-1}g_i) \big) \big) b_i \\
& = \sum_{i,j=1}^n b_j^* \alpha_{g_j}\big(\psi(g_j^{-1}g_i) \big) b_i\,, 
\end{align*}
which shows that $\sum_{i,j=1}^n b_i^* \alpha_{g_i}\big(\psi(g_i^{-1}g_j) \big) b_j$ is self-adjoint.
As (\ref{omeganeg}) holds for every state $\omega$ on $A$ we can therefore conclude that  
\[\sum_{i,j=1}^n b_i^* \alpha_{g_i}\big(e^{-t\psi(g_i^{-1}g_j)}\big) b_j \leq 0\,.\]
Thus we have shown that $\psi \in \nd$, that is, $(i)$ holds.

Suppose now that $\psi$
$\in \nd$ is $Z(A)$-valued.
In order to show that 
$(ii)$ holds in this case, it is enough    
to show that $e^{-\psi}$ is $\alpha$-positive definite, i.e., that
the $Z(A)$-valued matrix $\big[\alpha_{g_i}(e^{-\psi(g_i^{-1}g_j)})\big]$ is positive in $M_n(Z(A))$ for any given $g_1,\ldots,g_n \in G$. 
To this end, using the properties of the exponential function, we may write
$$\alpha_{g_i}\big(e^{-\psi(g_i^{-1}g_j)}\big) = e^{-\alpha_{g_i}(\psi(g_i^{-1}g_j))} 
= e^{\psi(g_i)^* + \psi(g_j) - \psi(e) -\alpha_{g_i}(\psi(g_i^{-1}g_j))} \ e^{-\psi(g_i)^* - \psi(g_j) + \psi(e)}  \  $$
for every $i, j$.
Setting $b_i = e^{\frac{1}{2}\psi(e)-\psi(g_i)} \in Z(A)$ for $i=1, \ldots, n$, we get that  
\[\Big[e^{-\psi(g_i)^* - \psi(g_j) + \psi(e)}\Big] = \big[\,b_i^*\,b_j\,\big] \]
is positive in $M_n(Z(A))$.
Therefore, using Lemma \ref{schur}, it is enough to show that 
\begin{equation}\label{exppos}
\Big[e^{\psi(g_i)^* + \psi(g_j) - \alpha_{g_i}(\psi(g_i^{-1}g_j))}\Big] \geq 0\,.
\end{equation}
 Now, Proposition \ref{gamma}  gives that 
the matrix $\Big[\psi(g_i)^* + \psi(g_j) - \alpha_{g_i}(\psi(g_i^{-1}g_j))\Big]$ is positive in $M_n(Z(A))$. Since Lemma \ref{schurexp} says that the Schur exponential of a $Z(A)$-valued positive matrix is still positive, 
we see that (\ref{exppos}) is satisfied. Hence we are done.

\end{proof}

\begin{corollary}\label{1-par}
Let $\psi$ be a 
normalized 
$Z(A)$-valued $\alpha$-negative definite function.
Then there exists a one-parameter semigroup $(M_t)_{t \geq 0}$ of unital completely positive maps on the full crossed product $C^*(A, G, \alpha)$
satisfying
$$M_t (F) =  e^{-t \psi} F $$
for all $t\geq0$ and all $F\in C_c(G,A)$. Moreover, if $\Lambda$ denotes the canonical homomorphism from $C^*(A, G, \alpha)$ onto the reduced crossed product $C_r^*(A, G, \alpha)$, then there also exists a one-parameter semigroup $(M^r_t)_{t \geq 0}$ of unital completely positive maps on $C_r^*(A, G, \alpha)$
satisfying
$$M^r_t \big(\Lambda(F)\big) =  \Lambda (e^{-t \psi} F) $$
for all $t\geq0$ and all $F\in C_c(G,A)$, i.e., $M^r_t \circ \Lambda = \Lambda \circ M_t$ for all $t\geq0$.
\end{corollary}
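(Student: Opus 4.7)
The plan is to combine Theorem \ref{schoenberg} with the known correspondence between $Z(A)$-valued $\alpha$-positive definite functions and completely positive maps on crossed products. First, since $\psi$ is $Z(A)$-valued and $\alpha$-negative definite, Theorem \ref{schoenberg} gives that $e^{-t\psi}$ is $\alpha$-positive definite for every $t > 0$; this also holds trivially at $t=0$, where $e^{-0\cdot\psi}$ is the constant function $1_A$. The normalization $\psi(e)=0$ further ensures that $(e^{-t\psi})(e) = 1_A$ for all $t \geq 0$.

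Next, I would invoke the construction recalled in the Introduction (see \cite{DoRu, BeCo6}): any $Z(A)$-valued $\alpha$-positive definite function $\varphi$ with $\varphi(e) = 1_A$ yields a unital completely positive map $M_\varphi$ on $C^*(A, G, \alpha)$ such that $M_\varphi(F) = \varphi F$ on $C_c(G, A)$, together with a unital completely positive map $M_\varphi^r$ on $C_r^*(A, G, \alpha)$ satisfying $M_\varphi^r \circ \Lambda = \Lambda \circ M_\varphi$. Applying this with $\varphi_t := e^{-t\psi}$ furnishes the candidate families $M_t := M_{\varphi_t}$ and $M_t^r := M_{\varphi_t}^r$, each of which is unital and completely positive by construction.

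The semigroup law then reduces to a pointwise identity on $C_c(G, A)$: for $F \in C_c(G, A)$, $g \in G$, and $s, t \geq 0$, one has
\[
(M_s \circ M_t)(F)(g) \,=\, e^{-s\psi(g)}\, e^{-t\psi(g)}\, F(g) \,=\, e^{-(s+t)\psi(g)}\, F(g) \,=\, M_{s+t}(F)(g),
\]
where the middle equality uses that $\psi(g)$ lies in the commutative algebra $Z(A)$ and hence commutes with itself, so the functional calculus behaves as in the scalar case. Density of $C_c(G, A)$ in $C^*(A, G, \alpha)$ together with the norm-continuity of each $M_t$ extends this to $M_s \circ M_t = M_{s+t}$ on all of the full crossed product; the corresponding identity on the reduced side follows from the intertwining relation with $\Lambda$ and the density of $\Lambda(C_c(G, A))$ in $C_r^*(A, G, \alpha)$. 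At $t=0$ we have $M_0 = \mathrm{id}$ and $M_0^r = \mathrm{id}$ since $\varphi_0 \equiv 1_A$.

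I do not anticipate any substantive obstacle: the only point to verify beyond the cited machinery is that $t \mapsto e^{-t\psi}$ is itself a one-parameter semigroup under pointwise multiplication, and this works precisely because $\psi$ takes values in the commutative subalgebra $Z(A)$. The corollary is thus essentially a packaging of Theorem \ref{schoenberg} together with the positive-definite-to-c.p.-map construction of \cite{DoRu, BeCo6}.
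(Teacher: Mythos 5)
Your proposal is correct and follows essentially the same route as the paper, which proves the corollary by combining Theorem \ref{schoenberg} with the positive-definite-to-completely-positive-map machinery of \cite{BeCo6} (Proposition 4.3 there) and \cite{DoRu}. The extra details you supply (the semigroup law via pointwise multiplication in $Z(A)$, the case $t=0$) are exactly the routine verifications the paper leaves to the cited references.
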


\begin{proof} Both statements follow by combining Theorem \ref{schoenberg} with \cite[Proposition 4.3]{BeCo6}. The second statement  can also be deduced from Theorem \ref{schoenberg} and \cite[Theorem 3.2]{DoRu}.
\end{proof}

\begin{remark} 
Let $(M_t)_{t \geq 0}$  be as described in Corollary \ref{1-par}.  Arguing as in \cite[Proposition 4.5]{Ren10} one obtains that the generator $-\Delta$ of this semigroup has the dense $*$-subalgebra $C_c(G,A)$ as its essential domain, and we have $\Delta F=\psi F$ for all $F\in C_c(G,A)$. 
(A similar remark is true for the semigroup $(M^r_t)_{t \geq 0}$.) Following Sauvageot \cite{Sau3} (see also \cite{Ren10}), one may then associate to $(M_t)_{t \geq 0}$ a Dirichlet form $\mathcal{L}$ on $C_c(G,A)$, which may be described in terms of a $C^*$-correspondence $E$ over $C^*(A,G,\alpha)$ and a derivation $\delta: C_c(G,A)\to E$. When $A=C(\Omega)$ is commutative, one may identify $C^*(A,G,\alpha)$ with the full $C^*$-algebra of the associated transformation groupoid $(G, \Omega)$. In this case, Renault gives in \cite[Theorem 4.6]{Ren10} a concrete description of the pair $(E, \delta)$. We believe it should be possible to obtain an analogous description also when $A$ is noncommutative. 
\end{remark}

We recall that a function $f: G \to A$ is said to {\it go to zero at infinity} if, for any $\epsilon >0$, there exists a finite subset $F \subset G$ such that
$\|f(g)\| < \epsilon$ for all $g \notin F$  (that is, $g \mapsto \|f(g)\| \in C_0(G)$). We denote by $C_0(G,A)$ the space of all such functions.

Next, assume that $G$ is countable. We  recall from \cite{DoRu} that 
$\alpha$ is said to have the {\it Haagerup property}  if there exists a sequence $(h_n)$ of 
$\alpha$-positive definite $Z(A)$-valued functions on $G$ such that $h_n(e)=1_A$ , $h_n \in C_0(G,A)$ for all $n \in \mathbb{N}$ 
and $\|h_n(g) - 1_A\| \to 0$ as $n \to \infty$ for all $g \in G$. (Note that Dong and Ruan's definition of $\alpha$-positive definiteness in \cite{DoRu} is slightly different than the one introduced in \cite{AD1}, but this is essentially a matter of convention and does not affect the definition of the Haagerup property for $\alpha$).  
It is easy to check that $\alpha$ has the Haagerup property  if 
the same property holds for a net $(h_\iota)_{\iota \in I}$ instead of a sequence.
It is a simple exercise to check that if $G$ has the Haagerup property, then $\alpha$ has the Haagerup property. 
On the other hand, if $\alpha$ has the Haagerup property and there exists an $\alpha$-invariant state $\omega$ on $A$, then $G$ has the Haagerup property (for if $(h_n)$ is sequence that works for $\alpha$, then  
$(\omega\circ h_n)$ will work for $G$). 

We will say that a function $\psi: G \to A^+$ is {\it spectrally proper} if the function \[\ell_\psi : g \mapsto  \inf {\rm sp}\big(\psi(g)\big)\] 
is proper as a function from $G$ to ${\mathbb R}^+$. Notice that this is a stronger property than requiring that the function $g \mapsto \|\psi(g)\|$ is proper in the usual sense.

The Haagerup property for a countable group $G$ may be characterized by the existence of a proper normalized negative definite function from $G$ into $\mathbb{R}^+$ (see \cite{CCJJV} and references therein). Analogously, we have:

\begin{theorem}\label{Haa-sp-prop}
Assume that $G$ is countable. Then  $\alpha$ has the Haagerup property if and only if and there exists there exists a spectrally proper $Z(A)^+$-valued normalized $\alpha$-negative definite function on $G$.
\end{theorem}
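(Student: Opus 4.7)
The plan is to prove the two directions separately; the sufficiency reduces almost immediately to Theorem \ref{schoenberg}, while the necessity requires a classical-style diagonal argument, with the main subtlety being the passage from norm-vanishing at infinity for a sequence of positive definite functions to \emph{spectral} properness of a single negative definite function.

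For $(\Leftarrow)$, given a spectrally proper, $Z(A)^+$-valued, normalized $\psi\in\nd$, I would set $h_n:=e^{-\psi/n}$ for each $n\in\mathbb{N}$. By Theorem \ref{schoenberg}, each $h_n$ is $\alpha$-positive definite; it is visibly $Z(A)$-valued with $h_n(e)=1_A$. The continuous functional calculus yields $\|h_n(g)\|=e^{-\ell_\psi(g)/n}$, which tends to $0$ as $g\to\infty$ because $\ell_\psi$ is proper, so $h_n\in C_0(G,A)$. Similarly $\|h_n(g)-1_A\|=\sup_{\lambda\in{\rm sp}(\psi(g))}|e^{-\lambda/n}-1|\to 0$ as $n\to\infty$ for each fixed $g$, since ${\rm sp}(\psi(g))$ is a compact subset of $[0,\infty)$. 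Thus $(h_n)$ witnesses the Haagerup property for $\alpha$.

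For $(\Rightarrow)$, starting from a sequence $(h_n)$ witnessing the Haagerup property, I would first replace each $h_n$ by ${\rm Re}\, h_n$, which by Lemma \ref{phistar} remains $\alpha$-positive definite, $Z(A)$-valued with value $1_A$ at $e$, in $C_0(G,A)$, and converges pointwise to $1_A$. Then Remark \ref{oneminus} together with Proposition \ref{commut} gives that $\psi_n:=1_A-{\rm Re}\, h_n$ lies in $\ndz$ and is $Z(A)^+$-valued. Enumerating $G=\{g_1=e,g_2,\ldots\}$ and setting $F_k:=\{g_1,\ldots,g_k\}$, I would pass to a subsequence $(n_k)$ with $\|\psi_{n_k}(g)\|\leq 2^{-k}$ for all $g\in F_k$ and define $\psi:=\sum_{k=1}^\infty \psi_{n_k}$. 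For each fixed $g=g_j$ the tail $\sum_{k\geq j}\|\psi_{n_k}(g)\|\leq \sum_{k\geq j}2^{-k}$ is summable, so the series converges absolutely in norm. Lemma \ref{pointwise} (closure of $\ndz$ under pointwise norm limits) combined with norm-closedness of $Z(A)^+$ in $A$ shows that $\psi$ is a $Z(A)^+$-valued, normalized element of $\ndz$.

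The main obstacle is verifying that $\psi$ is \emph{spectrally} proper, not merely norm-proper. For this I would exploit that each $\psi_{n_k}$ belongs to $C_0(G,A)$: for every $k$ there is a finite set $E_k\subset G$ with $\|h_{n_k}(g)\|<1/2$ for $g\notin E_k$, and since $h_{n_k}(g)\in Z(A)$ this norm bound forces ${\rm Re}\, h_{n_k}(g)\geq -\tfrac{1}{2}\,1_A$, hence $\psi_{n_k}(g)\geq \tfrac{1}{2}\,1_A$, in the order of $Z(A)$. Given $N\in\mathbb{N}$, the finite set $F:=E_1\cup\cdots\cup E_{2N+1}$ then satisfies
\[\psi(g)\,\geq\,\sum_{k=1}^{2N+1}\psi_{n_k}(g)\,\geq\,\tfrac{2N+1}{2}\,1_A\]
for every $g\notin F$, so $\ell_\psi(g)>N$ off $F$, proving properness of $\ell_\psi$. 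This argument works precisely because we are in the commutative algebra $Z(A)$: the infimum of the spectrum is superadditive on finite sums of positive elements, so the uniform lower bounds on the individual $\psi_{n_k}(g)$ accumulate into a uniform lower bound on the spectrum of $\psi(g)$.
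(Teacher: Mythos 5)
Your proof is correct and follows essentially the same route as the paper: the forward direction passes to a self-adjoint-valued modification of $h_n$, extracts a subsequence, sums $\psi=\sum_k\big(1_A-{\rm Re}\,h_{n_k}\big)$, and accumulates the uniform lower bounds $\tfrac12\,1_A$ off finite sets to get spectral properness, while the converse applies Theorem \ref{schoenberg} to exponentials of $-\psi$; the only cosmetic differences are that the paper uses $\varphi_n=h_n^*h_n$ (via Lemmas \ref{phistar} and \ref{posdefprod}) where you use ${\rm Re}\,h_n$, and the net $(e^{-t\psi})_{t>0}$ where you use the sequence $e^{-\psi/n}$. One small slip to fix: from $\|h_{n_k}(g)\|<1/2$ the inequality you actually need is ${\rm Re}\,h_{n_k}(g)\leq\tfrac12\,1_A$ (not ${\rm Re}\,h_{n_k}(g)\geq-\tfrac12\,1_A$, which would only give an upper bound on $\psi_{n_k}(g)$); it follows from the same norm bound and self-adjointness, so your conclusion $\psi_{n_k}(g)\geq\tfrac12\,1_A$ stands.
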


\begin{proof}
Assume first that  $\alpha$ has the Haagerup property and let $(h_n)$ be a sequence as in the definition. 
For each $n\in \mathbb{N}$ define $\varphi_n:G\to Z(A)^+$ by $\varphi_n(g) = h_n(g)^*h_n(g)$ for all $g\in G$. Then using Lemmas \ref{phistar} and \ref{posdefprod} we get that $(\varphi_n)$ is a sequence  in $C_0(G,A)$ of $Z(A)^+$-valued 
$\alpha$-positive definite  functions satisfying $\varphi_n(e) = 1_A$, and $\|\varphi_n(g) - 1_A\| \to 0$ as $n \to \infty$ for all $g \in G$.

Let now $(K_n)$ be an increasing and exhausting sequence $(K_n)$ of finite subsets of $G$.
Passing to a subsequence of $(\varphi_n)$ if necessary, we can assume that
$\|1_A - \varphi_n(g)\| \leq 1/2^n$ for all $n \in {\mathbb N}$ and $g \in K_n$. Since $\|\varphi_n(g)\| \leq \|\varphi_n(e)\| = 1$ (cf.~\cite[Proposition 2.4 ii)]{AD1}),
we get that $1_A - \varphi_n(g) \in Z(A)^+$ for all $n$ and $g$. Moreover, $(1-1/2^n)1_A \leq \varphi_n(g) \leq 1_A$ for all $n \in {\mathbb N}$ and $g \in K_n$.
Now, each function $1-\varphi_n$ is a $Z(A)^+$-valued normalized $\alpha$-negative definite function, cf.~Remark \ref{oneminus}.
Since $\sum_{j=1}^\infty \|1_A - \varphi_j(g)\| < +\infty$ for all $g \in G$, we can define $\psi:G\to Z(A)^+$ by $\psi(g)=\sum_{j=1}^\infty \big(1_A - \varphi_j(g)\big)$.
Using Lemma \ref{pointwise} we get that $\psi$ is a normalized $\alpha$-negative definite function. It remains to show that $\psi$ is spectrally proper.

For each $n \in {\mathbb N}$, using that $\varphi_n \in C_0(G,A)$, we can find a finite subset $F_n \subset G$ 
such that $\|\varphi_n(g)\| < 1/2$ for any $g \notin F_n$. Since $\varphi_n(g) \geq 0$, we have $\varphi_n(g) < \frac{1}{2}1_A$ for all $g \notin F_n$
and $K_n \subset F_n$ for each $n$.

Define $G_n = \bigcup_{j=1}^n F_j$, so $K_n \subset G_n$ and $(G_n)$ is an increasing and exhausting sequence of finite subsets of $G$.
Consider $g \notin G_n$. Then $\varphi_j(g) < \frac{1}{2}\, 1_A$ for $j=1,\ldots,n$, so 
$$\psi(g) = \sum_{j=1}^\infty (1_A - \varphi_j(g)) \geq \sum_{j=1}^n \frac{1}{2}1_A = \frac{n}{2} \,1_A \ . $$
Thus $\ell_\psi(g) \geq n/2$. It is now clear that $\ell_\psi$ is proper, i.e., $\psi$ is spectrally proper, as desired.

Conversely,  assume that there exists a spectrally proper $Z(A)^+$-valued normalized $\alpha$-negative definite function $\psi$ on $G$, and consider the net $(e^{-t \psi})_{t > 0}$. By Theorem \ref{schoenberg}, each $e^{-t \psi}$ is $\alpha$-positive definite and takes its values in
$Z(A)^+$. Clearly, $e^{-t \psi(e)} = 1_A$ for every $t>0$. Moreover, for $t >0$ and $g\in G$, we have \[\|e^{-t \psi(g)}\| = \sup \big\{ e^{-t\lambda} \ | \lambda \in {\rm sp}(\psi(g))\big\} = e^{-t \,\ell_\psi(g)}\,,\] which goes to $0$ as $g \to \infty$ for each $t>0$
since $\ell_\psi$ is proper. Thus $e^{-t \psi} \in C_0(G,A)$ for all $t>0$. 
Finally, it is clear that $\lim_{t \to 0} \|e^{-t \psi(g)} - 1_A\| = 0$ for all $t \in {\mathbb R}^+$. Hence we conclude that $\alpha$ has the Haagerup property.

\end{proof}

\begin{example}
Let us say that $\alpha$ is {\it centrally amenable}  if there exists a net $(h_i)$ of finitely supported
$\alpha$-positive definite $Z(A)$-valued functions on $G$ such that $h_i(e)=1_A$  for all $i$
and $\|h_i(g) - 1_A\| \to 0$ as $n \to \infty$ for all $g \in G$. Clearly, this is a stronger property than the Haagerup property for $\alpha$. We also note that if $\alpha$ is centrally amenable, then $\alpha$ is amenable in the sense of Anantharaman-Delaroche \cite{AD1} (and also as defined in \cite{BeCo6}). 

Now, assume that $\alpha$ is amenable as defined by Brown and Ozawa in their book \cite{BrOz}. Then $\alpha$ is centrally amenable. Indeed, if $(\xi_i)$ is a net satisfying the requirements of \cite[Definition 4.3.1]{BrOz}, then it is not difficult to see that the net $(h_i)$ in $C_c(G,A)$ defined by \[h_i(g) = \big\langle \xi_i, \widetilde{\alpha}_g(\xi_i)\big\rangle,\] 
where \[ \big\langle \xi, \eta\big\rangle = \sum_{s\in G} \xi(s)^*\eta(s)\quad \text{ and } \quad [\widetilde{\alpha}_g(\xi)](h) = \alpha_g\big(\xi(g^{-1}h)\big)\]
for $\xi, \eta \in C_c(G,A)$ and $g, h\in G$, satisfies all the conditions needed for showing that $\alpha$ is centrally amenable. The main point is that each $h_i$ is $\alpha$-positive definite, as follows from \cite[p.~300-301]{AD1}. Hence, if $G$ is countable,  we can conclude that $\alpha$ has the Haagerup property, and Theorem \ref{Haa-sp-prop} gives that there exists a spectrally proper $Z(A)^+$-valued normalized $\alpha$-negative definite function on $G$.
\end{example}

\begin{remark}
Recall (see e.g.~\cite{CCJJV, BHV}) that when $G$ is countable, then $G$ has property (T) if and only if every negative definite function from $G$ to $\mathbb{C}$ is bounded. 
One could therefore say that an action $\alpha$ has property (T) (resp.~has the central property (T)) if every $\alpha$-negative definite function (resp.~every center-valued $\alpha$-negative definite function) is bounded. Clearly $\alpha$ will have the central property (T) whenever it has property (T). Moreover,   $G$ will have property (T) whenever  $\alpha$ has the central property (T).

Indeed, assume $\alpha$ has the central property (T) and let $f:G\to \mathbb{C}$ be negative definite. Define $f_0:G\to \mathbb{C}$ by $f_0(g) = f(g) -f(e)$. Then $f_0$ is normalized and negative definite. Now let $\psi: G\to A$ be given by 
$\psi(g)= f_0(g)\, 1_A$.  Then $\psi$ is center-valued and normalized, and it follows from Remark \ref{scalarnegdef} that $\psi$ is $\alpha$-negative definite. Using the assumption, $\psi$ has to be bounded. So $f_0$ is bounded, and this clearly implies that $f$ is bounded too. Hence, $G$ has property (T).

Note that if $A$ has the strong property (T), as defined by Leung-Ng in \cite{LN},  and $G$ has property (T), then any $C^*$-crossed product of $A$ by $G$ also has the strong property (T) \cite[Theorem 4.6]{LN}. If one  assumes that  $\alpha$ has property (T), or the central property (T), it would be interesting to know if one can find some (weaker) conditions on $A$ ensuring that $C^*(A,G,\alpha)$ (or $C_r^*(A,G,\alpha)$) still has the strong property (T). 
\end{remark}

\noindent{\bf Acknowledgements.} 
Most of the present work was done during visits
made by E.B.~at the Sapienza University of Rome and by R.C.~at the 
University of Oslo in 2015 and 2016. 
Both authors would like to thank these institutions for their kind hospitality. 

\bigskip

{\parindent=0pt Addresses of the authors:\\

\smallskip Erik B\'edos, Institute of Mathematics, University of
Oslo, \\
P.B. 1053 Blindern, N-0316 Oslo, Norway.\\ E-mail: bedos@math.uio.no \\

\smallskip \noindent
Roberto Conti, 
Universit\`{a} Sapienza di Roma, \\
 Dipartimento di Scienze di Base e Applicate per l'Ingegneria,\\
 via A. Scarpa 16, I-00166 Roma, Italy.
\\ E-mail: roberto.conti@sbai.uniroma1.it\par}

\end{document}